\begin{document}

\newtheorem{theorem}{Theorem}
\newtheorem{corollary}[theorem]{Corollary}
\newtheorem{lemma}[theorem]{Lemma}
\newtheorem{algol}{Algorithm}
\newtheorem{cor}[theorem]{Corollary}
\newtheorem{prop}[theorem]{Proposition}

\theoremstyle{definition}
\newtheorem{remark}{Remark}

\newcommand{\comm}[1]{\marginpar{%
\vskip-\baselineskip 
\raggedright\footnotesize
\itshape\hrule\smallskip#1\par\smallskip\hrule}}

\def\cA{{\mathcal A}}
\def\cB{{\mathcal B}}
\def\cC{{\mathcal C}}
\def\cD{{\mathcal D}}
\def\cE{{\mathcal E}}
\def\cF{{\mathcal F}}
\def\cG{{\mathcal G}}
\def\cH{{\mathcal H}}
\def\cI{{\mathcal I}}
\def\cJ{{\mathcal J}}
\def\cK{{\mathcal K}}
\def\cL{{\mathcal L}}
\def\cM{{\mathcal M}}
\def\cN{{\mathcal N}}
\def\cO{{\mathcal O}}
\def\cP{{\mathcal P}}
\def\cQ{{\mathcal Q}}
\def\cR{{\mathcal R}}
\def\cS{{\mathcal S}}
\def\cT{{\mathcal T}}
\def\cU{{\mathcal U}}
\def\cV{{\mathcal V}}
\def\cW{{\mathcal W}}
\def\cX{{\mathcal X}}
\def\cY{{\mathcal Y}}
\def\cZ{{\mathcal Z}}

\def\C{\mathbb{C}}
\def\F{\mathbb{F}}
\def\K{\mathbb{K}}
\def\Z{\mathbb{Z}}
\def\R{\mathbb{R}}
\def\Q{\mathbb{Q}}
\def\N{\mathbb{N}}
\def\M{{\mathsf{M}}}

\def\({\left(}
\def\){\right)}
\def\[{\left[}
\def\]{\right]}
\def\<{\langle}
\def\>{\rangle}

\def\e{e}

\def\eq{\e_q}
\def\fS{{\mathfrak S}}

\def\lcm{{\mathrm{lcm}}\,}

\def\fl#1{\left\lfloor#1\right\rfloor}
\def\rf#1{\left\lceil#1\right\rceil}
\def\mand{\qquad\mbox{and}\qquad}

\def\jt{\tilde\jmath}
\def\ellmax{\ell_{\rm max}}
\def\llog{\log\log}

\def\Qbar{\overline{\Q}}
\def\GL{{\rm GL}}
\def\Aut{{\rm Aut}}
\def\End{{\rm End}}
\def\Gal{{\rm Gal}}
\def\tr{{\operatorname{tr}}}


\title[Distribution of Atkin and Elkies primes]{On the distribution of Atkin and Elkies primes for reductions of elliptic curves on average}

\author{Igor E.~Shparlinski} 
\address{Department of Pure Mathematics, University of 
New South Wales, Sydney, NSW 2052, Australia}
\email{igor.shparlinski@unsw.edu.au}

\author{Andrew V. Sutherland}  
\address{Department of Mathematics, Massachusetts Institute of Technology, Cambridge, Massachusetts 02139, USA} 
\email{drew@math.mit.edu}

\subjclass{11G05, 11G07, 11L40, 11Y16}
\keywords{Elkies prime, elliptic curve, character sum} 

\begin{abstract}
For an elliptic curve $E/\Q$ without complex multiplication we study the distribution of Atkin and Elkies primes $\ell$, on average, over all good reductions of $E$ modulo primes $p$.
We show that, under the Generalised Riemann Hypothesis, for almost all primes $p$ there are enough small Elkies primes $\ell$ to ensure that the Schoof-Elkies-Atkin point-counting algorithm runs in $(\log p)^{4+o(1)}$ expected time.
\end{abstract}

\maketitle

\section{Introduction}  
Let $E$ be a fixed elliptic curve over $\Q$ given by an integral Weierstrass model of minimal discriminant $\Delta_E$, and let $\F_p$ denote the finite field with $p$ elements.
Primes $p$ that do not divide $\Delta_E$ are said to be \emph{primes of good reduction} (for $E$), and for such primes $p$ we let $E_p$ denote the elliptic curve over $\F_p$ obtained by reducing the coefficients of $E$ modulo~$p$.
We  assume throughout that $E$ does not have \emph{complex multiplication} (CM), meaning that ${\rm End}_{\Qbar}(E)\simeq\Z$.
This assumption excludes only a finite set of $\Qbar$-isomorphism classes of elliptic curves for which the point-counting problem we consider is easily addressed in any case.
See~\cite{ACDFLNV,Silv} for background on  elliptic curves.

We always assume that $p$ is large enough,  and in particular, that~$p$ is a prime of good reduction greater than $3$.
We denote by $N_p$  the cardinality of $E_p(\F_p)$, the group of $\F_p$-rational points on $E_p$,
and define the {\it trace of Frobenius\/}
$t_p = p+1 - N_p$.
We say that an odd prime 
$\ell \ne p$ is an  {\it Elkies prime\/} for $E_p$
if the discriminant
$$
D_p=t_p^2 - 4p
$$ 
is a quadratic residue
modulo $\ell$; otherwise $\ell\ne p$ is called an {\it  Atkin prime\/} for $E_p$.
We note that the Hasse bound implies $t_p^2 < 4p$, so $D_p$ is always negative.

Recall that an elliptic curve over $\F_p$ is {\it ordinary\/} if its trace of Frobenius $t_p$ is not a multiple of $p$; for $p>3$ we can have $p\mid t_p$ only when $t_p=0$.
We therefore say that a prime $p$ is {\it ordinary\/} (for $E$) if $t_p\ne 0$,
and we say that  $p$ is {\it supersingular\/} otherwise.
It is well known that when $E$ does not have~CM almost all primes are ordinary;
in fact we know from the striking results of Elkies~\cite{Elk1} that while there are infinitely many supersingular primes,
the number of supersingular primes $p\le P$ is bounded by $O(P^{3/4})$.

The {\it Schoof-Elkies-Atkin algorithm\/} (SEA) is a widely used method to determine the number of rational points on an elliptic curve over a finite field.
For finite fields of large characteristic (in particular, the prime fields considered here), it is believed to be the asymptotically fastest approach.
As in Schoof's original algorithm~\cite{Sch1}, the basic strategy is to determine the trace of Frobenius $t_p$ modulo sufficient many small primes $\ell$.  By the Hasse bound, it suffices to do this for a set of primes whose product exceeds $4\sqrt{p}$.
The key improvement, due to Elkies, is a probabilistic method to determine $t$ modulo $\ell$ in $\ell(\ell+\log p)^{2+o(1)}$ expected time (see Theorem~\ref{thm:ElkiesPrime} for a more precise bound), provided that $\ell$ is an Elkies prime and $E$ is an ordinary elliptic curve with $j(E)\not\in \{0,1728\}$.
The Atkin primes also play a role in the algorithm, but their impact is asymptotically negligible and not considered here.
See~\cite{Sch2} for further details.

The standard heuristic complexity analysis of the SEA algorithm assumes there are approximately the same number of  
Atkin and Elkies primes $\ell < L$, where $L\sim \log p$, as $p\to\infty$;
see~\cite[\S17.2.2 and \S17.2.5]{ACDFLNV}, for example.
The validity or failure of this assumption  crucially affects  the expected running time of the SEA algorithm.
When it holds, the expected running time is $(\log p)^{4+o(1)}$ (see Corollary~\ref{cor:Elkies}).
It is known that the heuristic assumption regarding an approximately equal proportion of Atkin and Elkies primes $\ell < L$ starting with $L\sim \log p$, is not always true~\cite{Shp}; in some cases one may require a larger value of~$L$ (but this does not necessarily contradict the heuristic $(\log p)^{4+o(1)}$ bound on the 
expected running time  of the SEA algorithm). 
 
Little can be said about the worst-case running time of the SEA algorithm unconditionally, but under the Generalised Riemann Hypothesis (GRH) it can be bounded by $(\log p)^{8+o(1)}$ 
(see Corollary~\ref{cor:Elkies2}).
This follows from a result of Galbraith and Satoh~\cite[Appendix~A]{Sat}, who prove a GRH-based bound of $(\log p)^{2+o(1)}$ on the largest Elkies prime needed.\footnote{We note that~\cite[Appendix~A]{Sat} gives an expected time of $(\log p)^{3\mu+2+o(1)}$ for SEA under GRH, where $\mu$ is the exponent in multiplication, but, as confirmed to us by the authors, this bound is incorrect.  See Remark 2 in Section~\ref{sec:aux} for details.}

By comparison, the complexity of Schoof's original deterministic algorithm \cite{Sch1,Sch2} is just $(\log p)^{5+o(1)}$ (see Corollary~\ref{cor:Schoof} for a more precise bound).
Thus, even assuming the GRH, one can not prove that the SEA algorithm is actually an improvement over Schoof's algorithm, although in practice its performance is empirically superior.
There is therefore an interest in what can be said about the distribution of Elkies and Atkin primes ``on average".
In~\cite{ShpSuth} it is shown that for any sufficiently large prime $p$ almost all elliptic curves over $\F_p$ have, up to a constant factor, approximately the same number of Elkies and Atkin primes (unconditionally).
Here we consider the analogous question for the reductions $E_p$ of our fixed elliptic curve $E/\Q$ and obtain a similar result,
conditional on the GRH.

Traditionally, Elkies  and  Atkin primes $\ell$ are defined only for ordinary primes $p$.
For the purpose of stating (and proving) our results, it is convenient to extend the definition to all primes $p$; we address the ordinary/supersingular distinction when we discuss algorithmic applications. 

Thus for a prime $p>3$ of good reduction for $E$ and a real $L$, we 
define $R_a(p;L)$ and $R_e(p;L)$ as the number of Atkin and Elkies primes, respectively, in the dyadic interval $[L, 2L]$, for the elliptic curve $E_p$.
We clearly have
\begin{equation}
\label{eq:Sum RaRe}
R_a(p; L) + R_e(p; L) =  \pi(2L)-\pi(L) + O\(1\),
\end{equation}
where $\pi(z)$ denotes the number of primes $\ell < z$, and it is 
natural to expect that
\begin{equation}
\label{eq:NaNe}
R_a(p; L) \sim R_e(p; L)  \sim    \frac{\pi(2L)-\pi(L)}{2},
\end{equation}
as $L\to \infty$. 

Here we prove, under the GRH,
that for all sufficiently large $P$
the asymptotic relations in~\eqref{eq:NaNe} hold for almost 
all primes $p\in [P, 2P]$, for a wide range 
of parameters $L$ and $P$. Our analysis relies on 
a bound of sums of Jacobi symbols involving Frobenius discriminants $D_p$,
due to Cojocaru and David~\cite{CojDav}.

Throughout the paper all implied constants may depend on the fixed
elliptic curve~$E$. The letters $\ell$ and $p$, with and without subscripts, 
always denote prime  numbers.
Our main result is the following: 

\begin{theorem} 
\label{thm:AvElkies} Under the GRH, for $\nu =1,2$ and any real $L,P\ge 1$ we have
\begin{equation*}
\begin{split}
 \frac{1}{\pi(2P)-\pi(P)}  \sum_{p\in [P, 2P]}&
  \left|R_*(p;L) -  \frac{\pi(2L)-\pi(L)}{2}\right|^{2\nu}\\
 & =O\(  \frac{L^{\nu}}{(\log L)^{\nu}}
 + \frac{L^{8\nu}(\log P)^2}{ P^{1/2} (\log L)^{2\nu}} \),
  \end{split}
\end{equation*}
 where $R_*(p;L)$ is either $R_a(p; L)$ or $R_e(p; L)$. 
\end{theorem}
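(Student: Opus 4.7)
The plan is to convert the deviation $R_*(p;L) - (\pi(2L)-\pi(L))/2$ into a sum of Jacobi symbols in the Frobenius discriminant $D_p$, then bound its $2\nu$-th moment by expanding, swapping sums, and applying the Cojocaru--David character-sum estimate from~\cite{CojDav}.

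By definition, an odd prime $\ell \in [L, 2L]$ with $\ell \neq p$ and $\ell \nmid D_p$ is Elkies for $E_p$ precisely when $\left(\frac{D_p}{\ell}\right) = +1$ and Atkin when $\left(\frac{D_p}{\ell}\right) = -1$. Together with~\eqref{eq:Sum RaRe}, this yields
\begin{equation*}
R_*(p;L) - \frac{\pi(2L) - \pi(L)}{2} = \pm\tfrac12 S(p;L) + O(\log p),
\end{equation*}
where $S(p;L) := \sum_{\ell \in [L, 2L]} \left(\frac{D_p}{\ell}\right)$ and the error absorbs the at most $O(\omega(D_p)+1) = O(\log p)$ primes $\ell \in [L, 2L]$ that either equal $p$ or divide $D_p$. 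After raising to the $2\nu$-th power it suffices to bound
\begin{equation*}
T_\nu := \sum_{p \in [P, 2P]} |S(p;L)|^{2\nu},
\end{equation*}
and to check that the $O(\log p)$ remainder, raised to the $2\nu$-th power and averaged over $p$, is dominated by the first term of the theorem in the range of $L$ where that bound is nontrivial.

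Expanding $|S(p;L)|^{2\nu}$ and exchanging the order of summation gives
\begin{equation*}
T_\nu = \sum_{\ell_1, \ldots, \ell_{2\nu} \in [L, 2L]} \sum_{p \in [P, 2P]} \prod_{i=1}^{2\nu} \left(\frac{D_p}{\ell_i}\right).
\end{equation*}
For each tuple $(\ell_1,\ldots,\ell_{2\nu})$ let $m$ be the product of the primes appearing with odd multiplicity; then $m$ is squarefree, and whenever $\gcd(D_p,\ell_1\cdots\ell_{2\nu}) = 1$ the product collapses to $\left(\frac{D_p}{m}\right)$. I would split the tuple sum according to whether $m = 1$ (the \emph{diagonal}) or $m > 1$ (the \emph{off-diagonal}). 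On the diagonal each prime appears evenly; a direct combinatorial count for $\nu = 1,2$ gives $O(L^\nu/(\log L)^\nu)$ such tuples, each contributing at most $\pi(2P)-\pi(P)$ to $T_\nu$, which produces the first main term after normalisation. On the off-diagonal the inner sum $\sum_{p \in [P, 2P]} \left(\frac{D_p}{m}\right)$ is a nontrivial Jacobi symbol sum with squarefree modulus $m \leq (2L)^{2\nu}$, to which the GRH-conditional Cojocaru--David estimate applies, yielding a $P^{1/2}$ saving at the cost of a polynomial factor in $m$.

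Summing the off-diagonal contribution over all $O(L^{2\nu}/(\log L)^{2\nu})$ tuples, the polynomial $m$-dependence of Cojocaru--David combined with the crude bound $m \leq L^{2\nu}$ produces the second main term $L^{8\nu}(\log P)^2 P^{-1/2}/(\log L)^{2\nu}$, with the exponent $8\nu$ being the image of the $m$-exponent in the Cojocaru--David estimate. The main obstacle is the off-diagonal bookkeeping: tracking the $m$-dependence cleanly, treating tuples in which some $\ell_i$ divides $D_p$ (so the reduction to $\left(\frac{D_p}{m}\right)$ must be justified by a separate, small contribution), and verifying that tuples with primes of multiplicity greater than two yield only lower-order corrections. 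Once the character-sum input is in place, the diagonal count and final synthesis are routine.
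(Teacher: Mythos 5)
Your character-sum skeleton (expand the $2\nu$-th moment, isolate the diagonal tuples in which every prime occurs with even multiplicity, and apply the GRH-conditional Cojocaru--David estimate to the rest) is exactly the paper's route, and your exponent bookkeeping for the two main terms is correct. The genuine gap is in how you dispose of the primes $\ell\in[L,2L]$ that divide $D_p$ (or equal $p$). You bound their number pointwise by $O(\omega(D_p)+1)=O(\log p)$ and then assert that this remainder, raised to the power $2\nu$ and averaged over $p$, is dominated by the first term of the theorem ``in the range where the bound is nontrivial''. It is not: the theorem is claimed for all real $L,P\ge 1$, and in the range that actually matters for the SEA application, $L\asymp \log P$, the first term is $L^{\nu}/(\log L)^{\nu}=(\log P)^{\nu+o(1)}$ while your remainder contributes $(\log P)^{2\nu}$; even the sharper deterministic bound $\omega_L(D_p)\ll \log p/\log L$ still gives $(\log P/\log\log P)^{2\nu}$, which is too large, and the second term of the theorem is negligible there because of the factor $P^{-1/2}$. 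This contribution cannot be handled pointwise. The paper instead bounds the $2\nu$-th moment of $\omega_L(D_p)$ on average over $p\in[P,2P]$ (Lemma~\ref{lem:DivAver}), which in turn rests on a GRH-conditional effective Chebotarev estimate for the condition $r\mid D_p$ (Lemma~\ref{lem:Cong}, adapted from David--Wu); this yields the contribution $P/(\log L\log P)+L^{8\nu}P^{1/2}\log P/(\log L)^{2\nu}$, which is absorbed into the stated bound. Without an average input of this kind your argument does not prove the theorem as stated, and the same issue infects your justification of the reduction from $\prod_i\bigl(\frac{D_p}{\ell_i}\bigr)$ to $\bigl(\frac{D_p}{m}\bigr)$ for tuples with repeated primes.

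A secondary point: you invoke the Cojocaru--David estimate for an arbitrary squarefree modulus $m\le (2L)^{2\nu}$, but \cite[Theorem~3]{CojDav} is stated for a product of two distinct primes. For $\nu=2$ one needs the four-prime version, which the paper proves as Lemma~\ref{lem:CharSum4}; the nonroutine step is the evaluation of $A_1-A_{-1}$ in~\eqref{eq:A-A}, and the lack of such an identity for more primes is precisely why the theorem is restricted to $\nu=1,2$. You should state and prove the four-prime case rather than cite it as known.
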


\begin{cor} 
\label{cor:AvElkies} Under the GRH, for $\nu =1,2$ and any real $L,P\ge 1$ 
there are at most $O\(PL^{-\nu} (\log L)^{\nu}(\log P)^{-1} + L^{6\nu} P^{1/2} \log P\)$ primes $p\in [P,2P]$ for which 
$$
R_*(p;L) < \frac13 (\pi(2L)-\pi(L)) ,
$$
 where  $R_*(p;L)$ is either $R_a(p; L)$ or $R_e(p; L)$. 
\end{cor}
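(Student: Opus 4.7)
The plan is to deduce this corollary from Theorem~\ref{thm:AvElkies} via a standard Chebyshev-type argument, observing that any prime $p$ violating the lower bound on $R_*(p;L)$ must contribute a sizeable amount to the $2\nu$-th moment estimated in the theorem.

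First, I would set $A = \pi(2L)-\pi(L)$ and note that by the prime number theorem $A \gg L/\log L$ once $L$ is large enough (for bounded $L$ the stated bound is trivial since the implied constants absorb a bounded contribution). If a prime $p\in[P,2P]$ satisfies $R_*(p;L) < A/3$, then
\[
\left|R_*(p;L)-\frac{A}{2}\right| > \frac{A}{6} \gg \frac{L}{\log L},
\]
so each such $p$ contributes at least $c(L/\log L)^{2\nu}$ to the sum in Theorem~\ref{thm:AvElkies}, for some absolute $c>0$.

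Next, if $B$ denotes the number of such exceptional primes, summing only over them and applying the theorem gives
\[
B \cdot \frac{L^{2\nu}}{(\log L)^{2\nu}} \ll (\pi(2P)-\pi(P))\left(\frac{L^{\nu}}{(\log L)^{\nu}} + \frac{L^{8\nu}(\log P)^{2}}{P^{1/2}(\log L)^{2\nu}}\right).
\]
Using $\pi(2P)-\pi(P) \ll P/\log P$ and dividing through by $L^{2\nu}/(\log L)^{2\nu}$, the first term becomes
\[
\frac{P}{\log P}\cdot\frac{(\log L)^{\nu}}{L^{\nu}} = O\!\left(\frac{P(\log L)^{\nu}}{L^{\nu}\log P}\right),
\]
and the second becomes
\[
\frac{P}{\log P}\cdot\frac{L^{6\nu}(\log P)^{2}}{P^{1/2}} = O\!\left(L^{6\nu}P^{1/2}\log P\right),
\]
exactly matching the claimed bound.

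There is no real obstacle here: the argument is a textbook moment-to-deviation deduction, and all non-trivial content is already packaged into Theorem~\ref{thm:AvElkies}. The only mild subtleties are handling the small-$L$ regime (absorbed by the implied constant, since the exceptional set is then a subset of the primes in $[P,2P]$, itself $O(P/\log P)$) and being careful that the same argument applies uniformly for both $R_*=R_a$ and $R_*=R_e$, which it does since the theorem treats them identically. The $1/3$ threshold is not special: any constant strictly less than $1/2$ works, which is why the bound depends only on $\nu$ and not on the choice of threshold.
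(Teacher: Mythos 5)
Your proposal is correct and is precisely the intended deduction: the paper states Corollary~\ref{cor:AvElkies} without proof as an immediate consequence of Theorem~\ref{thm:AvElkies}, and the standard Chebyshev-type argument you give (each exceptional $p$ contributes $\gg (L/\log L)^{2\nu}$ to the moment, then divide) is exactly how it follows, with the exponents matching as you computed. No substantive difference from the paper's (implicit) argument, so nothing further is needed.
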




It is easy to see  that Theorem~\ref{thm:AvElkies} and Corollary~\ref{cor:AvElkies}
give nontrivial bounds when
$$
\psi(P) 
\le L \le P^{1/12} \(\log P\)^{-1/3}\psi(P)^{-1}, 
$$
for any function $\psi(z) \to \infty$ as $z \to \infty$  
and all sufficiently 
large $P$. This comfortably includes the range of $L$ of 
order $\log P$ needed to guarantee
$$
\prod_{\substack{\ell \in [L,2L]\\\ell~{Elkies~prime}}} \ell > 4p^{1/2},
$$
which is relevant to the SEA algorithm, see~\cite[Theorem~13]{Suth2}.

As we have mentioned, the SEA algorithm does not apply to supersingular primes $p$.
However, such primes can be identified in $(\log p)^{3+o(1)}$ expected time~\cite[Proposition~4]{Suth1},
and by~\cite{Elk1}, there are only $O(P^{3/4})$ supersingular primes in $[P,2P]$.
Thus this does not affect our 
algorithmic applications.
We now apply Corollary~\ref{cor:AvElkies} with $\nu =2$ and
$L = 2\log P$.

\begin{cor} 
\label{cor:SEA} Under the GRH, for any real $P\ge 3$ 
 the SEA algorithm computes $N_p$ in $(\log p)^{4+o(1)}$ expected time for all
but
$$
O\(P(\log P)^{-2}  (\log \log P)^{2}\)
$$
primes $p \in [P,2P]$.
\end{cor}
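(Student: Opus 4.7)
The plan is to substitute specific parameter choices into Corollary~\ref{cor:AvElkies} and combine the result with the standard SEA complexity analysis. I take $\nu = 2$ and $L = 2\log P$.

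First, I would verify that the count of exceptional primes from Corollary~\ref{cor:AvElkies} fits within the target bound. With these choices, the first term in that count becomes $P L^{-2}(\log L)^2(\log P)^{-1} = O(P(\log\log P)^2(\log P)^{-3})$, and the second term becomes $L^{12} P^{1/2}\log P = O(P^{1/2}(\log P)^{13})$. Both are comfortably dominated by $O(P(\log P)^{-2}(\log\log P)^2)$. To this set of exceptional primes I would append the $O(P^{3/4})$ supersingular primes in $[P,2P]$, a bound due to Elkies~\cite{Elk1}; these can in any case be identified in $(\log p)^{3+o(1)}$ expected time by~\cite[Proposition~4]{Suth1}, and their count is negligible relative to the bound we seek.

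Next, for any ordinary prime $p\in[P,2P]$ outside this exceptional set, Corollary~\ref{cor:AvElkies} gives $R_e(p;L)\ge \tfrac{1}{3}(\pi(2L)-\pi(L))$. Since each Elkies prime in $[L,2L]$ is at least $L$, and by the prime number theorem $\pi(2L)-\pi(L) \sim L/\log L$, the product of these Elkies primes is at least
\begin{equation*}
L^{(\pi(2L)-\pi(L))/3} = e^{(L/3)(1+o(1))} = P^{2/3+o(1)},
\end{equation*}
which exceeds $4p^{1/2}$ for $P$ sufficiently large. By~\cite[Theorem~13]{Suth2} this is enough for SEA to recover $t_p$ using only these Elkies primes.

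Finally, I would estimate the expected running time. By Theorem~\ref{thm:ElkiesPrime}, each Elkies prime $\ell\le 2L = 4\log P$ costs $\ell(\ell+\log p)^{2+o(1)} = (\log P)^{3+o(1)}$ expected time, and there are at most $\pi(2L) = (\log P)^{1+o(1)}$ such primes. The total expected time is therefore $(\log P)^{4+o(1)}$, as claimed. The only real work is the bookkeeping of parameter balance: checking simultaneously that the count of exceptional primes stays within the target, that the product of retained Elkies primes exceeds $4p^{1/2}$, and that the accumulated $o(1)$ exponents remain absorbable. No new analytic input is needed beyond Corollary~\ref{cor:AvElkies}.
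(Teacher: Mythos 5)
Your proposal is correct and follows essentially the same route as the paper: apply Corollary~\ref{cor:AvElkies} with $\nu=2$ and $L=2\log P$, discard the $O(P^{3/4})$ supersingular primes via~\cite{Elk1} and~\cite{Suth1}, note that the surviving primes have enough Elkies primes in $[L,2L]$ for their product to exceed $4p^{1/2}$ (cf.~\cite[Theorem~13]{Suth2}), and then invoke the standard SEA complexity analysis. The only cosmetic point is that the per-prime trace computation should be attributed to Theorem~\ref{thm:Elkies} (or Corollary~\ref{cor:Elkies}, which also covers the $j(E_p)\in\{0,1728\}$ cases) rather than Theorem~\ref{thm:ElkiesPrime}, whose bounds are the same.
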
 

As noted above, Schoof's algorithm computes $N_p$ in time $(\log p)^{5+o(1)}$ for every prime $p$. Thus for any prime $p\in [P,2P]$, if we find that the SEA algorithm appears to be be taking significantly longer than the expected $(\log p)^{4+o(1)}$ time bound, we can revert to Schoof's algorithm (here we note that all our implied constants essentially come from the work of Lagarias and Odlyzko~\cite{LaOd} and can be made effective for the purposes of making this determination). This can happen for only an $O((\log \log P)^2/(\log P))$ proportion of the primes $p\in [P,2P]$, which means that the average time spent per prime $p\in [P,2P]$ is still $(\log p)^{4+o(1)}$.
Applying this approach to each subinterval in a dyadic partitioning of $[1,P]$, we obtain the following result.


\begin{theorem}\label{thm:all}
Let $E$ be an elliptic curve over~$\Q$ and let $P\ge 3$ be a real number.
Under the GRH there is a probabilistic algorithm to compute $N_p$ for  primes $p \le P$ of good reduction for $E$ in 
$(\log P)^{4+o(1)}$ average time using $(\log P)^{2+o(1)}$ average space. 
\end{theorem}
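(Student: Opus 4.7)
The plan is a straightforward dyadic summation on top of the per-interval recipe already outlined in the paragraph preceding the statement. First, I would partition the primes $p\le P$ of good reduction into dyadic ranges $I_i=[P_i,2P_i]$ with $P_i=2^{-i}P$ for $i=0,1,\ldots,\lfloor\log_2 P\rfloor$. The finitely many primes $p\mid\Delta_E$ are handled ad hoc. Within each $I_i$ a supersingular prime is detected in $(\log p)^{3+o(1)}$ expected time by~\cite[Proposition~4]{Suth1}; by~\cite{Elk1} there are only $O(P_i^{3/4})$ of them, so running Schoof on those contributes $O(P_i^{3/4}(\log P_i)^{5+o(1)})$ in total, which is negligible.

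For each ordinary $p\in I_i$ I would run SEA with a hard timeout $T(p)=(\log p)^{4+1/\llog p}$, chosen so that any prime satisfying the hypothesis of Corollary~\ref{cor:Elkies} completes within $T(p)$ steps; the effectivity needed to compute $T(p)$ explicitly is inherited from Lagarias--Odlyzko~\cite{LaOd} as noted in the excerpt. If SEA exceeds $T(p)$, I would abort and invoke Schoof's algorithm, which always terminates in $(\log p)^{5+o(1)}$ time. By Corollary~\ref{cor:SEA} applied to $I_i$, at most $O(P_i(\llog P_i)^2/(\log P_i)^2)$ primes trigger the fallback, so their total cost is
$$
O\!\(\frac{P_i(\llog P_i)^2}{(\log P_i)^2}\)\cdot(\log P_i)^{5+o(1)}=P_i(\log P_i)^{3+o(1)},
$$
matching the $\pi(2P_i)\cdot(\log P_i)^{4+o(1)}=P_i(\log P_i)^{3+o(1)}$ contribution from the primes that complete under SEA.

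Summing over the $O(\log P)$ dyadic intervals yields a geometric series dominated by the $i=0$ term, giving a total running time of $P(\log P)^{3+o(1)}$; dividing by $\pi(P)\sim P/\log P$ produces the advertised $(\log P)^{4+o(1)}$ average time. An analogous dyadic summation of per-prime space (using $(\log p)^{2+o(1)}$ for SEA and $(\log p)^{3+o(1)}$ for Schoof, weighted by the corresponding prime counts from Corollary~\ref{cor:SEA}) gives average space $(\log P)^{2+o(1)}$, since in each $I_i$ both contributions amount to $P_i(\log P_i)^{1+o(1)}$ total storage. I do not expect any substantive obstacle here; the one point that requires care is arranging $T(p)$ so that the abort decision is provably well-defined, which is secured by the effectivity of the Lagarias--Odlyzko constants underlying Corollary~\ref{cor:SEA}.
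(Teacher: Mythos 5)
Your argument is correct and is essentially the paper's own proof: a dyadic partition of $[1,P]$, separate treatment of the $O(P^{3/4})$ supersingular primes, SEA with a timeout and fallback to Schoof, with Corollary~\ref{cor:SEA} bounding the exceptional primes in each interval so that both the good and the fallback primes contribute $P_i(\log P_i)^{3+o(1)}$ time (and $P_i(\log P_i)^{1+o(1)}$ space) per interval, giving the stated averages after dividing by $\pi(P)$. The only slip is your specific cutoff $T(p)=(\log p)^{4+1/\llog p}=e(\log p)^4$, which does not dominate the $O(n^4(\log n)^2\llog n)$ expected-time bound of Corollary~\ref{cor:Elkies}; take instead any effective bound exceeding it and, since that bound is only on the expectation of a Las Vegas algorithm, either invoke Markov's inequality on the truncated run or allow a few restarts before reverting to Schoof --- an informality the paper itself shares.
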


It is natural to compare Theorem~\ref{thm:all} to the recent remarkable result of 
Harvey~\cite{Harv1} that gives a deterministic algorithm to compute the number of points $N_p$ on the reductions $C_p$ of a fixed hyperelliptic curve $C/\Q$; see~\cite{Harv2,HarvSuth,HarvSuth2} for further developments and improvements.
Applying Harvey's result~\cite{Harv1} in genus 1 yields a deterministic algorithm with an unconditional amortised time complexity that
matches that of Theorem~\ref{thm:all}.
However, this amortised result is weaker than Theorem~\ref{thm:all}, since it assumes one is computing $N_p$ for all suitable primes $p\le P$, whereas Theorem~\ref{thm:all} applies to a randomly chosen $p\le P$.
Additionally, the space complexity
of the algorithm of~\cite{Harv1} is exponential in $\log P$ (even excluding the output), whereas the space complexity given by 
Theorem~\ref{thm:all} is polynomial in $\log P$
(even when computing $N_p$ for all suitable primes $p\le P$).

\section{Sums of Jacobi Symbols with Frobenius Discriminants}
\label{sec:CharSum}

We recall that
the notations $U \ll V$ and  $V \gg U$, which are both 
equivalent to the statement  $U = O(V)$. Throughout the paper
the implied constant may depend on the fixed elliptic curve $E$ and on the integer parameter $\nu \ge 1$. 
As usual, we use $(\frac{k}{m})$ to denote the Jacobi symbol
of integer $k$ modulo an odd integer $m \ge 3$.

We need the bound on sums of  Jacobi Symbols with Frobenius discriminants
given in~\cite[Theorem~3]{CojDav}, and also some of its modifications
modulo a product of four primes $\ell_1, \ell_2,\ell_3, \ell_4$.
For $m=\ell_1\ell_2$ (or $m=\ell_1\ell_2\ell_3\ell_4$ in our modified version),
these statements require
 the surjectivity of the mod-$m$ Galois representation
$$
\rho_{E,m}\colon\Gal(\Qbar/\Q)\to\Aut(E[m])\simeq \GL_2(\Z/m\Z)
$$
induced by the action of the absolute Galois group $\Gal(\Qbar/\Q)$ on the $m$-torsion subgroup $E[m]$ of $E(\Qbar)$.

By Serre's open image theorem~\cite{Ser}, when $E$ does not have complex multiplication the image of the adelic Galois representation
$$
\rho_E\colon \Gal(\Qbar/\Q)\to \Aut(E[\hat\Z])\simeq \GL_2(\hat \Z)
$$
has finite index $i_E$ in $\GL_2(\hat\Z)$ (as usual, $E[\hat\Z]$ denotes $\varprojlim E[m]$ and $\hat \Z$ denotes $ \varprojlim \Z/m\Z$).
There is thus a minimal integer $m_E$ for which the index of $\bar\rho_{E,m_E}$ in $\GL_2(\Z/m_E\Z)$ is equal to $i_E$, and for all integers $m$ coprime to $m_E$ (in particular, all $m$ whose prime divisors are sufficiently large), the representation $\bar\rho_{E,m}$ must be surjective.

With this understanding we now state~\cite[Theorem~3]{CojDav} in the form we need here.

\begin{lemma} 
\label{lem:CharSum2} Under the GRH, 
for all sufficiently large  $P$ and all sufficiently 
large distinct primes $\ell_1, \ell_2< P$, we have
\begin{equation*}
\begin{split}
\sum_{p \in [P,2P]} \(\frac{D_p}{\ell_1\ell_2}\)
=\bigl(\pi(2P) -\pi(P)&\bigr)\\
\prod_{i=1}^2 \(\frac{-1}{\ell_i}\) &\frac{1}{\ell_i^2-1} + 
O\(\ell_1^3 \ell_2^3 P^{1/2} \log P\).
  \end{split}
\end{equation*}
\end{lemma}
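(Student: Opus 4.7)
The plan is to view the Jacobi symbol $\(\frac{D_p}{\ell_1\ell_2}\)$ as the value of a class function on the Galois group of the mod-$\ell_1\ell_2$ division field of $E$, apply an effective Chebotarev density theorem under GRH, and evaluate the main term by an elementary count in $\GL_2$.

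First, set $K=\Q(E[\ell_1\ell_2])$ and $G=\Gal(K/\Q)$. For each prime $p$ of good reduction with $p\nmid \ell_1\ell_2$, the Frobenius class at $p$ maps under $\bar\rho_{E,\ell_1\ell_2}$ to some $g_p\in\GL_2(\Z/\ell_1\ell_2\Z)$ with $\tr g_p\equiv t_p$ and $\det g_p\equiv p\pmod{\ell_1\ell_2}$, whence $(\tr g_p)^2-4\det g_p\equiv D_p\pmod{\ell_1\ell_2}$. Define the class function
$$
\chi(g)=\(\frac{(\tr g)^2-4\det g}{\ell_1\ell_2}\)
$$
on $\GL_2(\Z/\ell_1\ell_2\Z)$. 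Since $\ell_1,\ell_2$ are assumed large enough to be coprime to $m_E$, Serre's open image theorem~\cite{Ser} guarantees that $\bar\rho_{E,\ell_1\ell_2}$ is surjective, so $\chi$ may be regarded as a class function on $G$, and the target sum equals $\sum_{p\in[P,2P]}\chi(\mathrm{Frob}_p)+O(1)$, with the $O(1)$ absorbing the primes dividing $\ell_1\ell_2$ or the conductor of $E$.

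Second, decompose by conjugacy class $C\subset G$ and apply the effective Chebotarev density theorem of Lagarias--Odlyzko~\cite{LaOd} under GRH,
$$
\pi_C(2P)-\pi_C(P)=\frac{|C|}{|G|}\bigl(\pi(2P)-\pi(P)\bigr)+O\!\(\frac{|C|}{|G|}\,P^{1/2}(\log d_K+[K:\Q]\log P)\).
$$
Summing against $\chi(C)$ yields the main term
$$
\frac{\pi(2P)-\pi(P)}{|G|}\sum_{g\in G}\chi(g),
$$
which by the Chinese Remainder Theorem factors over $i=1,2$ as a product of averages on $\GL_2(\F_{\ell_i})$. Each such average is evaluated by an elementary count: for every $(t,d)\in\F_{\ell_i}\times\F_{\ell_i}^\times$, partition matrices of trace $t$ and determinant $d$ into split, non-split, or parabolic classes according as $t^2-4d$ is a nonzero square, a nonsquare, or zero, and then sum over $(t,d)$. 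This collapses to the claimed $\(\frac{-1}{\ell_i}\)/(\ell_i^2-1)$ and produces the stated product.

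The main obstacle is controlling the error term. Naively summing the Chebotarev errors $|\chi(C)|\cdot|C|\,P^{1/2}(\log d_K+[K:\Q]\log P)/|G|$ over all conjugacy classes of $G\cong\GL_2(\Z/\ell_1\ell_2\Z)$ picks up the full factor $\log d_K+[K:\Q]\log P$, which even with Serre's standard discriminant bounds for elliptic division fields comes out at a higher power of $\ell_1\ell_2$ than the target $\ell_1^3\ell_2^3$. The key observation used in~\cite[Theorem~3]{CojDav} is that $\chi$ depends only on the pair $(\tr g,\det g)\in(\Z/\ell_1\ell_2\Z)^2$, so one may group conjugacy classes sharing a trace-determinant value --- of which there are only $O(\ell_1^2\ell_2^2)$ --- and bound each fibre size explicitly from the classification of $\GL_2$ conjugacy classes. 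Combining this with refined ramification estimates for $K/\Q$ reduces the effective number of Chebotarev applications and yields the claimed $O(\ell_1^3\ell_2^3 P^{1/2}\log P)$ bound.
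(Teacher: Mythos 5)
Your overall strategy---interpreting $\(\frac{D_p}{\ell_1\ell_2}\)$ as a class function of Frobenius in $\Gal(\Q(E[\ell_1\ell_2])/\Q)\simeq\GL_2(\Z/\ell_1\ell_2\Z)$ (surjectivity for large $\ell_1,\ell_2$ via Serre), applying effective Chebotarev under GRH, and evaluating the main term by a $\GL_2$ count---is exactly the strategy of the result being quoted: the paper gives no proof of this lemma at all, it simply cites \cite[Theorem~3]{CojDav}, and only the four-prime analogue (Lemma~\ref{lem:CharSum4}) is proved, by repeating that argument. The genuine gap in your reconstruction is the error analysis. With the Lagarias--Odlyzko error term you quote, $\frac{|C|}{|G|}P^{1/2}(\log d_K+[K:\Q]\log P)$, grouping conjugacy classes into the $O(\ell_1^2\ell_2^2)$ trace--determinant fibres gains nothing: that error is linear in $|C|$, so summing it over any partition of $G$ gives the same total $P^{1/2}(\log d_K+[K:\Q]\log P)$, and since $[K:\Q]=|\GL_2(\Z/\ell_1\ell_2\Z)|\asymp\ell_1^4\ell_2^4$, this is $\asymp\ell_1^4\ell_2^4P^{1/2}\log P$ regardless of how sharp your ramification estimates are (they only affect $\log d_K$, which is already dominated by $[K:\Q]\log P$). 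So the argument as written yields the lemma only with $\ell_1^4\ell_2^4$ in place of $\ell_1^3\ell_2^3$, which is too weak for the range of $L$ needed in Theorem~\ref{thm:AvElkies}. The missing ingredient, and the one actually used in \cite{CojDav}, is a refined GRH form of effective Chebotarev (of Murty--Murty--Saradha type) in which the error is proportional to $|C|^{1/2}$, with the degree and discriminant entering only through a factor logarithmic in $\ell_1\ell_2\Delta_E P$ (all ramified primes divide $\ell_1\ell_2\Delta_E$); then Cauchy--Schwarz over the $O(\ell_1^2\ell_2^2)$ classes gives $\sum_C|C|^{1/2}\ll(\ell_1^2\ell_2^2)^{1/2}|G|^{1/2}\asymp\ell_1^3\ell_2^3$, which is precisely the stated error term.

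A secondary point: you assert that the elementary $\GL_2(\F_{\ell_i})$ count ``collapses to'' $\(\frac{-1}{\ell_i}\)\frac{1}{\ell_i^2-1}$ without carrying it out. This is exactly the bookkeeping the paper makes explicit when proving Lemma~\ref{lem:CharSum4}, via the quantities $\cC_\ell(\pm1)$ and the identity~\eqref{eq:A-A}, and it is not the one-line computation you suggest: the naive class-size-weighted average of $\(\frac{(\tr g)^2-4\det g}{\ell}\)$ over $\GL_2(\F_\ell)$ comes out to $-1/(\ell^2-1)$, with no dependence on $\ell\bmod 4$, so recovering the stated constant requires matching the precise normalisation and the treatment of the discriminant-zero classes in \cite{CojDav}. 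For the application only the magnitude $O(\ell_1^{-2}\ell_2^{-2})$ of the main term matters, but as a proof of the stated formula this step must be done, not asserted.
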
 

We also need a straightforward generalisation of 
Lemma~\ref{lem:CharSum2} for  products of four primes.

\begin{lemma} 
\label{lem:CharSum4} Under the GRH, 
for all sufficiently large  $P$ and all sufficiently 
large distinct primes $\ell_1, \ell_2,\ell_3, \ell_4< P$,
we have
\begin{equation*}
\begin{split}
\sum_{p \in [P,2P]} \(\frac{D_p}{\ell_1\ell_2\ell_3  \ell_4}\)
= \bigl(\pi(2P) -\pi(P)&\bigr)\\
 \prod_{i=1}^4   \(\frac{-1}{\ell_i}\) & \frac{1}{\ell_i^2-1} + 
O\(\ell_1^3 \ell_2^3 \ell_3^3  \ell_4^3P^{1/2} \log P\).
  \end{split}
\end{equation*}
\end{lemma}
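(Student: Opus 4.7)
The plan is to follow the strategy used to establish Lemma~\ref{lem:CharSum2} (that is, \cite[Theorem~3]{CojDav}), generalising from two primes to four. The extension is largely mechanical because the Jacobi symbol modulo $m=\ell_1\ell_2\ell_3\ell_4$ factors multiplicatively under the Chinese Remainder Theorem, so the relevant class function, its average over the Galois group, and the degree and discriminant of the splitting field all decompose as products over the four primes.

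First I would set up the Galois-theoretic framework. Since $E$ has no CM, Serre's open image theorem guarantees that $\bar\rho_{E,m}\colon\Gal(\Qbar/\Q)\to\GL_2(\Z/m\Z)$ is surjective as soon as each $\ell_i$ exceeds the fixed constant $m_E$ and the $\ell_i$ are pairwise distinct; under CRT we then have $\GL_2(\Z/m\Z)\simeq\prod_{i=1}^{4}\GL_2(\Z/\ell_i\Z)$. For each prime $p\nmid m\Delta_E$, the Frobenius $\mathrm{Frob}_p\in\Gal(\Q(E[m])/\Q)$ acts on $E[\ell_i]$ with trace $t_p$ and determinant $p$ modulo $\ell_i$, so its characteristic polynomial has discriminant $D_p\bmod\ell_i$. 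Consequently
$$
\(\frac{D_p}{m}\)=\prod_{i=1}^{4}\(\frac{D_p}{\ell_i}\)=\chi(\mathrm{Frob}_p),
$$
where $\chi$ is the class function on $\GL_2(\Z/m\Z)$ defined by $\chi(\sigma)=\prod_{i=1}^{4}\chi_i(\sigma_i)$, with $\chi_i(\sigma_i)=\(\frac{(\tr\sigma_i)^2-4\det\sigma_i}{\ell_i}\)$ and $\sigma_i$ the projection of $\sigma$ to $\GL_2(\Z/\ell_i\Z)$.

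Next I would invoke the effective Chebotarev density theorem of Lagarias and Odlyzko~\cite{LaOd} under GRH, applied to $\Q(E[m])/\Q$, whose degree is $|\GL_2(\Z/m\Z)|\sim m^4$ and whose discriminant is polynomial in $m$. Summing the identity above over $p\in[P,2P]$ gives
$$
\sum_{p\in[P,2P]}\chi(\mathrm{Frob}_p)=\bigl(\pi(2P)-\pi(P)\bigr)\bar\chi+O\bigl(\ell_1^3\ell_2^3\ell_3^3\ell_4^3\,P^{1/2}\log P\bigr),
$$
where $\bar\chi$ denotes the average of $\chi$ over $\GL_2(\Z/m\Z)$. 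Surjectivity combined with CRT yields $\bar\chi=\prod_{i=1}^{4}\bar\chi_i$, and the per-prime average $\bar\chi_i=\(\frac{-1}{\ell_i}\)(\ell_i^2-1)^{-1}$ is exactly the computation performed in the proof of~\cite[Theorem~3]{CojDav}. Substituting these two factorisations yields the stated formula.

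The main obstacle is verifying that the error term retains the shape $\ell_1^3\ell_2^3\ell_3^3\ell_4^3\,P^{1/2}\log P$ rather than the cruder $m^4 P^{1/2}\log P$ that would follow from applying Chebotarev to the full degree. As in~\cite[Theorem~3]{CojDav}, one obtains the sharper bound by decomposing each $\chi_i$ into a short combination of irreducible characters of $\GL_2(\Z/\ell_i\Z)$ and applying the GRH bound to each associated Artin $L$-function; reproducing this refinement in the four-variable setting, and then multiplying the four per-prime error contributions together, is the principal technical step.
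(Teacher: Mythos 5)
Your proposal is correct and follows essentially the same route as the paper: both repeat the GRH effective Chebotarev argument of~\cite[Theorem~3]{CojDav} for the product modulus, inheriting the error term $\ell_1^3\ell_2^3\ell_3^3\ell_4^3P^{1/2}\log P$ from the per-prime character decomposition, and obtain the main term as the product of the local averages $\left(\frac{-1}{\ell_i}\right)\frac{1}{\ell_i^2-1}$. The only cosmetic difference is that the paper packages this last step as a direct verification that $A_1-A_{-1}=\prod_{i=1}^4\left(\frac{-1}{\ell_i}\right)\ell_i$ for its counting quantities $\cC_{\ell_i}(\pm1)$, which is precisely your CRT factorisation of the average of the class function $\chi=\prod_i\chi_i$ over $\GL_2(\Z/m\Z)$.
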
 

\begin{proof} The proof proceeds identically to that of~\cite[Theorem~3]{CojDav}.
In particular we define
$$
\cC_\ell(1)= \frac{\ell^3-\ell^2}{2} - 
\left\{\begin{array}{ll}
 \ell &\quad\text{if} \  \ell \equiv 1 \pmod 4, \\
0&\quad\text{if} \  \ell \equiv 3 \pmod 4,
\end{array}\right.
$$
and 
$$
\cC_\ell(-1)= \frac{\ell^3-\ell^2}{2} - 
\left\{\begin{array}{ll}
 0 &\quad\text{if} \  \ell \equiv 1 \pmod 4, \\
\ell &\quad\text{if} \  \ell \equiv 3 \pmod 4. 
\end{array}\right.
$$

For $\xi = \pm 1$, let $\Gamma_\xi$ be be set of vectors 
$(\gamma_1, \gamma_2, \gamma_3, \gamma_4)$
with 
$$\gamma_1, \gamma_2, \gamma_3, \gamma_4 = \pm 1 \mand 
\gamma_1 \gamma_2  \gamma_3 \gamma_4 = \xi.
$$
We then set 
$$
A_\xi(\ell_1, \ell_2,\ell_3, \ell_4) = \sum_{(\gamma_1, \gamma_2, \gamma_3, \gamma_4)\in \Gamma_\xi}
\cC_{\ell_1}(\gamma_1)\cC_{\ell_2}(\gamma_2)
\cC_{\ell_3}(\gamma_3)\cC_{\ell_4}(\gamma_4), \quad \xi = \pm 1.
$$
Arguing as in~\cite{CojDav}, see, 
for example~\cite[Equation~(18)]{CojDav} and~\cite[Theorem~9]{CojDav}, 
we obtain
\begin{equation*}
\begin{split}
\sum_{p \in [P,2P]} &\(\frac{D_p}{\ell_1\ell_2\ell_3  \ell_4}\)\\
&=\bigl(\pi(2P) -\pi(P)\bigr) \frac{A_1(\ell_1, \ell_2,\ell_3, \ell_4) - A_{-1}(\ell_1, \ell_2,\ell_3, \ell_4) }
{(\ell_1^3 - \ell_1)(\ell_2^3 - \ell_2)(\ell_3^3 -\ell_3)(\ell_4^3-\ell_4)}\\
& \qquad\qquad\qquad\qquad\qquad\qquad\qquad\quad
 + O\(\ell_1^3 \ell_2^3 \ell_3^3  \ell_4^3P^{1/2} \log P\).
  \end{split}
\end{equation*}
A direct calculation shows that 
\begin{equation}
\label{eq:A-A}
A_1(\ell_1, \ell_2,\ell_3, \ell_4) - A_{-1}(\ell_1, \ell_2,\ell_3, \ell_4) = 
\prod_{i=1}^4 \(\frac{-1}{\ell_i}\) \ell_i
\end{equation}
holds for all odd primes $\ell_1,\ell_2,\ell_3,\ell_4$.
\end{proof}

\section{Prime Divisors of  Frobenius Discriminants}
\label{sec:PrimDiv}

To apply Lemmas~\ref{lem:CharSum2} and~\ref{lem:CharSum4} 
we also need to estimate the 
average number of prime divisors $\ell \in [L, 2L]$
of the  Frobenius discriminants~$D_p$. Our main tool is provided 
by David and Wu~\cite[Theorem~3.2]{DW1}; 
see also~\cite[Theorem~2.3]{DW2}
for a similar statement concerning $N_p$. 

As usual, we use $\phi(r)$ to denote the Euler function 
of an integer $r\ge2$. A combination of some of the ideas in~\cite[Theorem~3.2]{DW1}
and~\cite[Lemma~2.2]{DW2} yields the following estimate.

\begin{lemma} 
\label{lem:Cong} Under the GRH, 
for an odd square-free integer $r\ge 2$ and  sufficiently large  $P$, we have
$$
\#\{p \in [P,2P]~:~D_p \equiv 0 \pmod r\} \ll
\frac{P}{\phi(r) \log P} + 
r^3 P^{1/2} \log P .
$$
\end{lemma}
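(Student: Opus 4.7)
The plan is to translate the congruence $D_p\equiv 0\pmod r$ into a Chebotarev problem for the division field $\Q(E[r])$, and then invoke the effective Chebotarev density theorem of Lagarias--Odlyzko~\cite{LaOd} under GRH. For each odd prime $\ell\mid r$ and each prime $p$ of good reduction with $p\ne\ell$, the Frobenius image $\rho_{E,\ell}(\mathrm{Frob}_p)\in\GL_2(\F_\ell)$ has characteristic polynomial $T^2-t_pT+p\pmod\ell$, so $D_p\equiv 0\pmod\ell$ is exactly the condition that this polynomial has a repeated root---equivalently, that $\rho_{E,\ell}(\mathrm{Frob}_p)$ lies in the conjugation-invariant set
$$
C_\ell=\{M\in\GL_2(\F_\ell):\tr(M)^2=4\det(M)\}.
$$
A direct count splits $C_\ell$ into scalar matrices together with non-scalar matrices having a one-dimensional eigenspace; for each fixed repeated eigenvalue $a\in\F_\ell^\times$ the latter forms a single conjugacy class of size $|\GL_2(\F_\ell)|/(\ell(\ell-1))=\ell^2-1$. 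Summing over $a$ gives $|C_\ell|=(\ell-1)\ell^2$ and the density $|C_\ell|/|\GL_2(\F_\ell)|=\ell/(\ell^2-1)\le 1/(\ell-1)$.

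For squarefree $r$ whose prime factors are all coprime to $m_E$, Serre's open image theorem together with the Chinese Remainder Theorem yields surjectivity of $\rho_{E,r}$ onto $\GL_2(\Z/r\Z)$ and the product formula
$$
\frac{|C_r|}{|\GL_2(\Z/r\Z)|}=\prod_{\ell\mid r}\frac{\ell}{\ell^2-1}\le\frac{1}{\phi(r)},
$$
where $C_r:=\prod_{\ell\mid r}C_\ell\subset\GL_2(\Z/r\Z)$. The finitely many exceptional primes $\ell\mid m_E$ contribute only a bounded correction which, for $P$ sufficiently large, is absorbed into the implicit constant (this is the role of the argument from~\cite[Lemma~2.2]{DW2}).

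I would then apply the effective Chebotarev bound under GRH to the Galois extension $L=\Q(E[r])/\Q$, which reads, in Serre's form,
$$
\#\{p\in[P,2P]:\rho_{E,r}(\mathrm{Frob}_p)\in C_r\}=\frac{|C_r|}{[L:\Q]}\bigl(\pi(2P)-\pi(P)\bigr)+O\bigl(|C_r|^{1/2}P^{1/2}\log(d_L\,P^{[L:\Q]})\bigr),
$$
combined with the crude estimate $[L:\Q]\le|\GL_2(\Z/r\Z)|\ll r^4$, the standard bound $\log|d_L|\ll[L:\Q]\log r$ (with the implicit constant depending on $E$, since $\Q(E[r])/\Q$ is ramified only at primes dividing $r$ and primes of bad reduction for $E$), and $|C_r|\le r^3$. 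A short computation then yields the main term $O(P/(\phi(r)\log P))$ and the error $O(r^3 P^{1/2}\log P)$, with logarithmic losses such as $r/\phi(r)\ll\llog r$ absorbed into the factor $\log P$.

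The hard part will be producing exactly the exponent $r^3$ in the error term: a naive application of Chebotarev combined with $[L:\Q]\ll r^4$ and the $|C_r|^{1/2}$-form of Lagarias--Odlyzko risks delivering a higher power of $r$. One needs the sharper form employed in~\cite[Theorem~3.2]{DW1}, in which the error inherits a factor of $|C_r|/[L:\Q]$ rather than $|C_r|^{1/2}$; paired against $[L:\Q]\log(rP)\ll r^4\log(rP)$ this produces precisely the shape $r^3 P^{1/2}\log P$. This bookkeeping---together with the handling of the bounded correction from small primes---is exactly the argument of~\cite[Theorem~3.2]{DW1} and~\cite[Lemma~2.2]{DW2}, which I would follow directly.
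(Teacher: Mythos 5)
Your proposal is correct and follows essentially the same route as the paper: the paper likewise defines the conjugation-invariant set $C(r)=\{g\in\GL(2,\Z/r\Z):4\det(g)\equiv\tr(g)^2\pmod r\}$ and invokes the GRH-effective Chebotarev density theorem exactly as in~\cite[Theorem~3.2]{DW1}, bounding the main term via the Chinese-Remainder-Theorem part of~\cite[Lemma~2.2]{DW2}. Your explicit count $|C_\ell|=(\ell-1)\ell^2$ with density $\ell/(\ell^2-1)\le 1/(\ell-1)$, and your observation that one must use the error term weighted by $|C_r|/[L:\Q]$ rather than $|C_r|^{1/2}$ to land on $r^3P^{1/2}\log P$, are precisely the ingredients the paper delegates to those references.
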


\begin{proof} The result follows from  an effective version of the Chebotarev Density Theorem exactly as~\cite[Theorem~3.2]{DW1}; accordingly, we refer to some notation 
of~\cite{DW1}. To derive the desired result,  we define the 
set of conjugacy classes $C(r)$ as follows:
$$
C(r) = \{ g \in \GL(2,\Z/r\Z)~:~ 4\det(g) \equiv \tr(g)^2 \pmod r\}
$$
(in one place in~\cite{DW1} the corresponding congruence is  $\det(g) +1\equiv \tr(g)  \pmod r$
which is inconsequential). Finally, we also use a full analogue 
of~\cite[Lemma~2.2]{DW2}, to 
upper bound the main term in the corresponding 
asymptotic formula. In fact, since $r$ is square-free, 
we only need the  part of~\cite[Lemma~2.2]{DW2} that relies on the 
Chinese Remainder Theorem, which generalises in a straightforward
fashion to the new congruence condition.
\end{proof}

We note that one can probably drop the condition that $r$ is square-free
in Lemma~\ref{lem:Cong}; however, this requires one to verify that the somewhat tedious 
lifting argument also works with the new congruence condition.

For an integer $d$ we denote by  $\omega_L(d)$  the number of primes  
$\ell\in[L,2L]$ for which $\ell \mid d$ (note that $\omega_L(0)=\pi(2L)-\pi(L)$ is well defined).

\begin{lemma} 
\label{lem:DivAver} Under the GRH, for any fixed integer $\nu=1, 2, \ldots$, 
and  sufficiently large  $P$, we have
$$
\sum_{p \in [P,2P]}\omega_L\(D_p\)^\nu \ll
\frac{P}{ \log L \log P} + \frac{L^{4\nu} P^{1/2} \log P}{(\log L)^{\nu}}.
$$
\end{lemma}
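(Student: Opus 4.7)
The plan is to reduce the $\nu$-th moment to divisibility counts of $D_p$ by squarefree products of primes in $[L,2L]$, and then invoke Lemma~\ref{lem:Cong}. Expand
$$
\sum_{p \in [P,2P]} \omega_L(D_p)^\nu = \sum_{\ell_1, \ldots, \ell_\nu \in [L,2L]} \#\{p \in [P,2P] : \ell_i \mid D_p\ \text{for}\ i=1,\ldots,\nu\},
$$
where the outer sum runs over $\nu$-tuples of primes. The inner joint divisibility condition is equivalent to $r \mid D_p$, where $r$ is the product of the \emph{distinct} primes in the tuple; in particular, $r$ is automatically squarefree.

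I would then group the $\nu$-tuples by their set-partition type, using the relation $i \sim j$ iff $\ell_i = \ell_j$. The number of such types depends only on $\nu$, so this introduces an absolute constant. A tuple whose type has $k$ blocks corresponds to an ordered choice of $k$ distinct primes in $[L,2L]$, giving $O\!\left((L/\log L)^k\right)$ such tuples by the prime number theorem, each with modulus satisfying $\phi(r) \gg L^k$ and $r \ll L^k$. Feeding this into Lemma~\ref{lem:Cong} and summing over all tuples of fixed type yields a contribution of
$$
O\!\left(\frac{P}{(\log L)^k \log P} + \frac{L^{4k} P^{1/2} \log P}{(\log L)^k}\right)
$$
for each $k = 1, \ldots, \nu$. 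The first term is maximized at $k=1$, producing the main term $P/(\log L \log P)$ from the diagonal of tuples with all $\ell_i$ equal; the second is maximized at $k=\nu$, producing the error term $L^{4\nu} P^{1/2} \log P / (\log L)^\nu$ from fully-distinct tuples. Summing across $k$ absorbs all intermediate types and gives the stated estimate.

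The argument is essentially combinatorial bookkeeping wrapped around a single application of Lemma~\ref{lem:Cong} for each partition type, so I do not anticipate a single sharp obstacle. The only minor technical point is to ensure that every squarefree $r$ arising in the expansion lies in the range where Lemma~\ref{lem:Cong} is applicable; this is guaranteed by taking $L$ larger than any fixed threshold coming from the Galois representation data, with finitely many exceptional small primes contributing $O(1)$ which is absorbed into the implied constant.
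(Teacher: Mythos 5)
Your proposal is correct and follows essentially the same route as the paper's proof: the same expansion of the $\nu$-th moment over tuples of primes in $[L,2L]$, grouping tuples by the number of distinct primes (your partition types are exactly the paper's count of terms with exactly $j$ distinct primes, of which there are $O(L^j(\log L)^{-j})$), and a single application of Lemma~\ref{lem:Cong} to the squarefree product, with the $k=1$ term giving the main term and the $k=\nu$ term giving the $L^{4\nu}P^{1/2}\log P(\log L)^{-\nu}$ term. No gaps; your remark about small exceptional primes is unnecessary since Lemma~\ref{lem:Cong} applies to any odd squarefree $r\ge 2$, but it is harmless.
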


\begin{proof} We write
$$
\sum_{p \in [P,2P]}\omega_L\(D_p\)^\nu 
= \sum_{\substack{\ell_1, \ldots, \ell_\nu \in [L,2L]\\
\ell_1, \ldots, \ell_\nu~\text{prime}}}
\sum_{\substack{p \in [P,2P]\\ 
\lcm[\ell_1, \ldots, \ell_\nu] \mid D_p}} 1. 
$$
Collecting for each $j=1, \ldots, \nu$ 
the $O\(L^j(\log L)^{-j}\)$ terms with exactly~$j$
distinct primes $\ell_1, \ldots, \ell_\nu$
and noticing that in this case 
$$
\sum_{\substack{p \in [P,2P]\\ 
\lcm[\ell_1, \ldots, \ell_\nu] \mid D_p}} 1
\ll
\frac{P}{L^j \log P} + 
L^{3j} P^{1/2} \log P , 
$$
by Lemma~\ref{lem:Cong}, we
obtain
$$
\sum_{p \in [P,2P]}\omega_L\(D_p\)^\nu 
\ll \sum_{j=1}^\nu \frac{L^j}{(\log L)^{j}}
\(\frac{P}{L^j \log P} + 
L^{3j} P^{1/2} \log P\),
$$
and the result follows.  
\end{proof}

\section{Proof of Theorem~\ref{thm:AvElkies}}
Recall that $R_a(p;L)$ and $R_e(p;L)$ denote the number of Atkin and Elkies primes, respectively, in the dyadic interval $[L,2L]$, for the elliptic curve $E_p$ (the reduction of our fixed elliptic curve $E/\Q$ modulo $p$).

We clearly have 
$$
R_a(p; L) - R_e(p; L)  = \sum_{\ell \in [L,2L]} \(\frac{D_p}{\ell}\)
+ O\(\omega_L(D_p)\),
$$
where, as before, $\omega_L(d)$   denotes the number of primes  
$\ell\in[L,2L]$ for which $\ell \mid d$.

Therefore,  by the H{\"o}lder inequality,
\begin{equation}
\label{eq:UV}
 \sum_{p\in [P, 2P]} \left|R_a(p; L) - R_e(p; L) \right|^\nu 
\ll  U+    V  + 1, 
\end{equation}
where
$$
U = 
\sum_{p\in [P, 2P]} 
\left| \sum_{\ell \in [L,2L]} \(\frac{D_p}{\ell}\)\right|^{2\nu}
\mand 
V = 
\sum_{p\in [P, 2P]}  \omega_L(D_p)^{2\nu}.
$$

We now consider the case of $\nu = 2$.
In this case, changing the order of summation, 
we obtain
$$
 U \le 
\sum_{\ell_1,\ell_2, \ell_3,\ell_4 \in [L,2L]}
 \sum_{p \in [P,2P]} \(\frac{D_p}{\ell_1\ell_2\ell_3  \ell_4}\).
$$
Without loss of generality we can assume that $2L < P$; otherwise the 
bound is trivial.

We estimate the sum over $p$ differently depending on the 
number of repeated values among $\ell_1,\ell_2, \ell_3,\ell_4$. 

\begin{itemize}
\item For $O(L^2/(\log L)^2)$ choices of $(\ell_1,\ell_2, \ell_3,\ell_4)$ 
for which the product $\ell_1\ell_2\ell_3  \ell_4$ is a perfect square,
we estimate the inner sum trivially as $O(P/\log P)$.

\item  For 
$O\(L^3/(\log L)^3\)$ choices of $(\ell_1,\ell_2, \ell_3,\ell_4)$ 
for which $\ell_1\ell_2\ell_3  \ell_4$ is not a perfect square
but is divisible by a nontrivial square, we use Lemma~\ref{lem:CharSum2}. 

\item For the remaining  choices of $(\ell_1,\ell_2, \ell_3,\ell_4)$ 
we use  Lemma~\ref{lem:CharSum4}.
\end{itemize}

Therefore, we find that
\begin{equation*}
\begin{split}
U
& \ll \frac{L^2}{(\log L)^2}\cdot\frac{P}{  \log P}
 + \frac{L^3}{(\log L)^3}\(\frac{P}{L^4\log P}+ L^{6} P^{1/2}\log P\)\\
 & \qquad \qquad \qquad \qquad \quad + 
  \frac{L^4}{(\log L)^4} \( \frac{P}{L^8\log P} + L^{12} P^{1/2}\log P\),
   \end{split}
\end{equation*}
which after removing the terms that never dominate, yields the bound
\begin{equation}
\label{eq:U2}
 U \ll  \frac{L^2P }{(\log L)^2 P} + \frac{L^{16} P^{1/2} \log P}{(\log L)^{4}} .
\end{equation}
Furthermore, by Lemma~\ref{lem:DivAver} we
have
\begin{equation}
\label{eq:V}
V \ll \frac{P}{ \log L \log P} + \frac{L^{16} P^{1/2} \log P}{(\log L)^{4}}.
\end{equation}
Substituting~\eqref{eq:U2} and~\eqref{eq:V} in~\eqref{eq:UV}
and noticing that the estimate on $U$ always dominates 
that on $V$, 
we obtain
\begin{equation*}
\begin{split}
 \sum_{p\in [P, 2P]}& \left|R_a(p; L) - R_e(p; L) \right|^{4} \ll 
 \frac{L^2P }{(\log L)^2 P} + \frac{L^{16} P^{1/2} \log P}{(\log L)^{4}} 
  \end{split}.
\end{equation*}
Combining this with~\eqref{eq:Sum RaRe}, 
we  conclude the proof for $\nu=2$.

The case $\nu =1$ is completely analogous albeit technically easier,
since we only have to use Lemma~\ref{lem:CharSum2}. 

\section{Some Auxiliary Estimates}
\label{sec:aux}

Here we take the opportunity to clarify and record  stronger 
versions of several relevant complexity bounds that have previously appeared
in the literature in less precise forms (and in some cases, with errors).
In this section $E$ denotes an elliptic curve over a finite field $\F_p$, where $p >3$ is prime and $E$ defined by an equation of the form $Y^2=f_E(X)$, where $f_E\in\F_p[X]$ is a monic square-free cubic.

We assume throughout that algorithms based on the fast Fourier transform (FFT) are used for multiplication.  This allows us to bound the time to multiply two $n$-bit integers by
\begin{equation}\label{eq:Mn}
\M(n) = O(n\log n\llog n),
\end{equation}
via the result of Sch\"onhage and Strassen~\cite{SS}. We note that this bound can be improved slightly \cite{Fur,HvdHL}, but we do not use this improvement.

The bound in \eqref{eq:Mn} not only asymptotically valid, it is practically relevant.
Using Kronecker substitution~\cite[\S8.4]{GG}, one can reduce the problem of multiplying two polynomials in $\F_p[X]$ of degree at most~$d$ to the multiplication of two integers with approximately $2d\log_2(dp))$ bits.
Even when $\log_2 p$ is not particularly large, $2d\log_2(dp)$ may easily be large enough to justify the use of the FFT; this applies, in particular, to algorithms for computing $\#E(\F_p)$ over cryptographic size fields, where $2d\log_2(dp)$ may easily exceed $10^5$ or $10^6$, even though $\log_2 p < 10^3$.

We also note the following complexity bounds for arithmetic in $\F_p$ and $\F_p[X]$, which follow from standard fast algorithms for division with remainder (see \cite[Ch.~9]{GG}) and the extended Euclidean algorithm (see \cite[Ch.~11]{GG}), combined with
Kronecker substitution.

\begin{lemma}\label{lem:arith}
Let $n = \rf{\log_2 p}$, let $a,b\in\F_p^\times$, let $f,g\in\F_p[X]$ be nonzero polynomials of degree at most $d$, and assume $\log d=O(n)$.
The following bounds hold:
\begin{center}
\setlength{\tabcolsep}{20pt}
\begin{tabular}{ll}
operation & complexity\\
\midrule
$ab$ & $O(\M(n))$\\
$a^{-1}$ & $O(\M(n)\log n)$\\
$fg$ & $O(\M(dn))$\\
$f \bmod g$ & $O(\M(dn))$\\
$\gcd(f,g)$ &  $O(\M(dn)\log d)$\\
\bottomrule
\end{tabular}
\end{center}
\vspace{8pt}
When $\gcd(f,g)=1$, the multiplicative inverse of the reduction of $f$ in the ring $\F_p[X]/(g)$ can be computed in time $O(\M(dn)\log d)$.
\end{lemma}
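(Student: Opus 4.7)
My plan is to verify each row of the table by invoking the standard fast algorithms from~\cite{GG}, applied together with Kronecker substitution to replace polynomial multiplication over $\F_p$ by integer multiplication. First I would observe that since $\log d = O(n)$, any integer of the form $dn$ appearing inside $\M(\cdot)$ satisfies $\log(dn)\llog(dn) = O(\log(dn)\llog(dn))$ with the sub-logarithmic factors well-controlled, so the stated bounds behave predictably under constant blow-ups in the argument.

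For the scalar operations in $\F_p$, the product $ab$ is computed by multiplying the two $n$-bit representatives as integers in time $O(\M(n))$ and then reducing modulo $p$, which is also $O(\M(n))$ via Barrett or Newton-based reduction. The inverse $a^{-1}$ follows from the fast extended Euclidean algorithm applied to $a$ and $p$, which runs in $O(\M(n)\log n)$ by~\cite[Ch.~11]{GG}.

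For polynomial arithmetic, I would use Kronecker substitution as described in~\cite[\S8.4]{GG}: two polynomials of degree $\le d$ over $\F_p$ are encoded as integers of bit-length $O(dn)$, multiplied using FFT-based integer multiplication in time $O(\M(dn))$, then unpacked and reduced coefficient-wise. This yields the bound for $fg$. Division with remainder $f\bmod g$ is handled by the Newton-iteration-based fast division algorithm of~\cite[Ch.~9]{GG}, whose cost reduces to a constant number of polynomial multiplications of degree $O(d)$, hence $O(\M(dn))$. For $\gcd(f,g)$, I would invoke the fast (half-gcd) Euclidean algorithm from~\cite[Ch.~11]{GG}, whose recurrence solves to $O(\M(dn)\log d)$. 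The same fast extended Euclidean algorithm also produces B\'ezout cofactors $u,v$ with $uf + vg = \gcd(f,g)$; when $\gcd(f,g)=1$, reducing $u$ modulo $g$ yields the inverse of $f$ in $\F_p[X]/(g)$ within the same bound $O(\M(dn)\log d)$.

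The only real point that needs care is bookkeeping: one must check that Kronecker substitution does not inflate the bit-length beyond $O(dn)$ (this uses the fact that $\F_p$-coefficients fit in $n$ bits and that the coefficients of a degree-$d$ product are bounded by $dp^2$, giving bit-length $O(n+\log d) = O(n)$ per coefficient slot), and that each of the fast subroutines we call is reduced by Kronecker substitution to integer arithmetic of the required size. All of this is standard and no new ideas are required; the hypothesis $\log d = O(n)$ simply ensures that secondary logarithmic factors inside $\M$ can be absorbed into the stated asymptotic bounds.
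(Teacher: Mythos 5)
Your proposal is correct and follows essentially the same route as the paper, which justifies the lemma by exactly this combination: Kronecker substitution (\cite[\S8.4]{GG}) to reduce polynomial multiplication to FFT-based integer multiplication, fast division with remainder (\cite[Ch.~9]{GG}), and the fast extended Euclidean algorithm (\cite[Ch.~11]{GG}) for the gcd and modular-inverse bounds. Your added bookkeeping on coefficient bit-lengths and the role of the hypothesis $\log d = O(n)$ is consistent with the paper's brief justification and requires no new ideas.
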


\subsection{Schoof's algorithm}

Let $\pi$ denote the Frobenius endomorphism of $E/\F_p$.
Schoof's algorithm computes $\#E(\F_p)$ by computing $t=\tr\ \pi$ modulo a set of primes $\ell$ whose product exceeds $4\sqrt{p}$, and then uses the Chinese Remainder Theorem to determine $t$.
By the Prime Number Theorem (PNT), $O(\log p)$ primes suffice.
To simplify matters, we restrict our attention to odd primes $\ell\ne p$.

The Frobenius endomorphism $\pi$ induces an endomorphism $\pi_\ell$ of $E[\ell]$ that satisfies the characteristic equation
$$
\pi_\ell^2-t_\ell\pi_\ell + p_\ell = 0
$$
in the ring $\End(E[\ell]):=\End(E)/(\ell)$.
Here $t_\ell$ and $p_\ell$ denote the elements of $\End([\ell])$ induced by scalar multiplication by $t$ and $p$, respectively.
Schoof's algorithm works by explicitly computing $\pi_\ell^2+p_\ell$ and $\pi_\ell, 2\pi_\ell,3\pi_\ell,\ldots$, using addition in $\End(E[\ell])$, until it finds a multiple of~$\pi_\ell$ that is equal to $\pi_\ell^2+p_\ell$; this multiple determines $t\bmod \ell$.
In order to give precise complexity bounds, we now sketch an explicit implementation of the algorithm; the presentation here differs slightly from that given by Schoof in \cite{Sch1,Sch2}, but it yields sharper results.

Let $\psi_\ell(X)$ denote the $\ell$th division polynomial of $E$; it is a polynomial of degree $(\ell^2-1)/2$ whose roots are the $x$-coordinates of the nonzero points in the $\ell$-torsion subgroup $E[\ell]$.
One can recursively define polynomials $f_0,f_1,\ldots f_k\in \F_p[X]$, depending on the coefficients of $E$, such that for odd integers $k$ the polynomial $f_k$ is precisely the $k$th division polynomial $\psi_k$; see \cite[\S 4.4.5a]{ACDFLNV}, for example.
The polynomials $f_k$ satisfy recursion relations that allow one to compute any particular $f_k$ using a double-and-add approach.  Each step involves $O(1)$ multiplications of polynomials of degree $O(k^2)$, and since $k$ is roughly doubling with each step, the total cost is dominated by the last step.
This allows one to compute $\psi_\ell(X)$ in $O(\M(\ell^2 n))$ time using $O(\ell^2 n)$ space.

Nonzero elements of $\End(E[\ell])$ can be uniquely represented as ordered pairs of elements of the ring $R=\F_p[X,Y]/(\psi_\ell(X),Y^2-f_E(X))$, of the form $\varphi =(\alpha(X),\beta(X)Y)$.
The endomorphism $\varphi$ sends a nonzero point 
$(x_0,y_0)\in E[\ell]$ to the point $(\alpha(x_0),\beta(x_0)y_0)\in E[\ell]$.
Addition in the ring $\End(E[\ell])$ uses the algebraic formulas for the elliptic curve group law applied to ``points" of the form $(\alpha(X),\beta(X)Y)$.
The cost of addition is dominated by the cost of an inversion in $\F_p[X]/(\psi_\ell(X))$, which is $O(\M(\ell^2n)\log\ell)$.
By switching to projective coordinates, we can avoid inversions and reduce the complexity to $O(\M(\ell^2 n))$; testing the equality of two projectively represented elements of $\End(E[\ell])$ involves $O(1)$ multiplications in $\F_p[X]/(\psi_\ell(X))$ and has the same complexity.

The Frobenius endomorphism is represented by the ordered pair
$$
(X^p,Y^p)=(X^p,f_E(X)^{(p-1)/2}Y),
$$
which is computed by exponentiating the polynomials $X$ and $f(X)$ in the ring $\F_p[X]/(\psi_\ell(X))$. Using the standard square-and-multiply algorithm for fast exponentiation, this takes $O(\M(\ell^2 n)n)$ time, and the same applies to computing $\pi_\ell^2$.
The endomorphism $p_\ell$ is computed as a scalar multiple of the identity endomorphism $(x,y)$; using a double-and-add approach in projective coordinates, it takes $O(\M(\ell^2n)\log\ell)$ time to compute $p_\ell$.

\begin{theorem}\label{thm:SchoofTrace}
Let $\ell\ne p$ be an odd prime, and assume $\log \ell = O(n)$, where $n = \rf{\log_2 p}$.
With the implementation described above, given an elliptic curve $E/\F_p$, Schoof's algorithm computes the trace of Frobenius modulo $\ell$ in
$$
O(\M(\ell^2n)(\ell+n))
$$
time, using $O(\ell^2 n)$ space.
\end{theorem}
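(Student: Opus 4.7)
The plan is to assemble the theorem's time and space bounds by carefully accounting for the cost of each subroutine in the implementation sketched above, and then to verify that the final search for $t\bmod\ell$ is not a bottleneck.

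First I would justify the precomputation of the division polynomial $\psi_\ell(X)$. Using the standard recursions for $f_k$ with a double-and-add approach, each step reduces to $O(1)$ multiplications of polynomials of degree $O(\ell^2)$, and since the degree roughly doubles, the last step dominates. By Lemma~\ref{lem:arith} (polynomial multiplication) and the assumption $\log\ell=O(n)$, this takes $O(\M(\ell^2 n))$ time and $O(\ell^2 n)$ space. From this point on, all arithmetic occurs in the ring $R=\F_p[X,Y]/(\psi_\ell(X),\,Y^2-f_E(X))$, where elements of $\End(E[\ell])$ are stored as pairs $(\alpha(X),\beta(X)Y)$ with $\deg\alpha,\deg\beta<(\ell^2-1)/2$, consuming $O(\ell^2 n)$ space per element.

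Next I would bound the cost of the four main operations in $\End(E[\ell])$ in projective coordinates: (i) a single addition, using the curve's addition formulas applied to such pairs, requires $O(1)$ multiplications in $\F_p[X]/(\psi_\ell(X))$, hence $O(\M(\ell^2 n))$ time; (ii) equality testing between two projectively represented endomorphisms has the same cost; (iii) computation of $\pi_\ell=(X^p,f_E(X)^{(p-1)/2}Y)$ via square-and-multiply exponentiation in $\F_p[X]/(\psi_\ell(X))$ requires $O(n)$ multiplications in that ring, for a total of $O(\M(\ell^2 n)\,n)$ time; squaring the result to get $\pi_\ell^2$ has the same bound; (iv) computing $p_\ell$ as a scalar multiple of the identity endomorphism by double-and-add takes $O(\M(\ell^2 n)\log\ell)=O(\M(\ell^2 n)\,n)$ time. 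Adding $\pi_\ell^2$ and $p_\ell$ yields one endomorphism $\varphi\in\End(E[\ell])$ in $O(\M(\ell^2 n))$ additional time.

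Finally, I would bound the cost of determining $t\bmod\ell$. Since $|t\bmod\ell|<\ell$, one can iterate $c\pi_\ell$ for $c=0,1,2,\ldots$ using a running sum, performing one addition and one equality check per step, until the value matches $\varphi$. This requires at most $O(\ell)$ additions and equality tests in $\End(E[\ell])$, each costing $O(\M(\ell^2 n))$, for a total of $O(\ell\,\M(\ell^2 n))$ time. Combining every contribution gives an overall time of
\[
O(\M(\ell^2 n)) + O(\M(\ell^2 n)\,n) + O(\ell\,\M(\ell^2 n)) = O(\M(\ell^2 n)(\ell+n)),
\]
as claimed. Throughout, at most $O(1)$ elements of $R$ are stored simultaneously, so the space bound $O(\ell^2 n)$ inherited from the storage of $\psi_\ell$ and a constant number of endomorphisms is maintained.

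The one subtlety worth checking carefully is the projective addition formula in $R$: one must confirm that the curve addition formulas never require inversion of a non-unit in $\F_p[X]/(\psi_\ell(X))$ during the computation. The standard resolution is that if such a non-invertible element is encountered, its $\gcd$ with $\psi_\ell(X)$ yields a proper factor, corresponding to a subgroup of $E[\ell]$ stable under Frobenius; in that case $t\bmod\ell$ can be read off directly, and otherwise the projective formulas go through with the stated $O(\M(\ell^2 n))$ cost per addition. This is the only genuine obstacle; the remainder is bookkeeping against Lemma~\ref{lem:arith}.
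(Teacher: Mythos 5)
Your accounting matches the paper's proof step for step: $O(\M(\ell^2 n))$ for computing $\psi_\ell$, $O(\M(\ell^2 n)\,n)$ for $\pi_\ell$ and $\pi_\ell^2$, $O(\M(\ell^2 n)\log\ell)$ for $p_\ell$, $O(\M(\ell^2 n))$ per projective addition and equality test with at most $\ell$ multiples tried, and $O(1)$ ring elements stored at any time, giving $O(\M(\ell^2n)(\ell+n))$ time and $O(\ell^2 n)$ space exactly as in the paper. The closing ``subtlety'' is moot for the implementation in question, since the entire reason for switching to projective coordinates is that no inversions occur at all, so no non-unit ever needs to be inverted (and your fallback claim that a nontrivial factor of $\psi_\ell$ lets one read off $t\bmod\ell$ directly is imprecise); dropping that paragraph leaves a complete argument essentially identical to the paper's.
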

\begin{proof}
The time to compute $\psi_\ell(X)$ is $O(\M(\ell^2 n))$.
The time to compute $\pi_\ell$ and $\pi_\ell^2$ is $O(\M(\ell^2n)n)$.
The time to compute $p_\ell$ is $O(\M(\ell^2n)\log\ell)$, and this dominates the time to add $\pi_\ell^2$ and $p_\ell$.
Computing each multiple $m\pi_\ell$ by adding $\pi_\ell$ to $(m-1)\pi_\ell$ takes time $O(\M(\ell^2n)$,
as does comparing $m\pi_\ell$ and $\pi_\ell^2+p_\ell$.
We compute at most $\ell$ multiples of $\pi_\ell$ before finding a match, giving a total cost of $O(\M(\ell^2n)\ell)$ for the linear search.
Summing the bounds above yields a total time of $O(\M(\ell^2n)(\ell+n))$.
We store just $O(1)$ elements of the ring $\F_p[X]/(\psi_\ell(X))$ at any one time, so the space complexity is $O(\ell^2 n)$, including space for $\psi_\ell(X)$.
\end{proof}

\begin{corollary}\label{cor:Schoof}
With the implementation described above, Schoof's algorithm computes the Frobenius trace of an elliptic curve $E/\F_p$ in
$$
O(n^5\llog n)
$$
time, using $O(n^3)$ space, where $n = \rf{\log_2 p}$.
\end{corollary}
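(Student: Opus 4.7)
The plan is to apply Theorem~\ref{thm:SchoofTrace} to a set of small primes whose product exceeds $4\sqrt{p}$, and then recover $t=\tr\pi$ via the Chinese Remainder Theorem. First I would fix the range: by the Prime Number Theorem the primorial satisfies $\prod_{\ell \le L}\ell = e^{L(1+o(1))}$, so taking $L$ to be a suitable constant multiple of $n$ guarantees that the product of the odd primes $\ell \le L$ with $\ell\ne p$ exceeds $4\sqrt{p}$, and by the Hasse bound this means the residues $t \bmod \ell$ for the $\pi(L) = O(n/\log n)$ such primes determine $t$.

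Next I would plug $\ell \le L = O(n)$ into Theorem~\ref{thm:SchoofTrace}. Since $\ell^2 n = O(n^3)$, formula \eqref{eq:Mn} gives $\M(\ell^2 n) = O(n^3 (\log n)(\llog n))$, and $\ell + n = O(n)$, so the per-prime time is $O(n^4 (\log n)(\llog n))$ and the per-prime space is $O(\ell^2 n) \le O(n^3)$. Summing over the $O(n/\log n)$ primes $\ell \le L$ then yields total time
\begin{equation*}
O(n/\log n) \cdot O(n^4(\log n)(\llog n)) = O(n^5 \llog n),
\end{equation*}
as required. For the space bound, the primes can be processed sequentially while accumulating the residues $t \bmod \ell$ in an $O(n)$-bit CRT workspace, so the total space in use at any instant is dominated by a single invocation of Theorem~\ref{thm:SchoofTrace}, namely $O(L^2 n) = O(n^3)$.

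I do not expect any real obstacle: the argument is pure bookkeeping on top of Theorem~\ref{thm:SchoofTrace}. The only points deserving a line of care are the removal of the single excluded prime $\ell = p$ from the set (which changes the primorial by at most a factor of $p$, easily absorbed by a trivial adjustment of the constant defining $L$) and the explicit choice of that constant, both of which are standard.
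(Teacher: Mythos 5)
Your proposal is correct and follows essentially the same route as the paper: take $L=O(n)$ via the PNT so that the $O(n/\log n)$ primes $\ell\le L$ have product exceeding $4\sqrt p$, apply Theorem~\ref{thm:SchoofTrace} with $\M(\ell^2 n)=O(n^3\log n\llog n)$ to get $O(n^4\log n\llog n)$ time and $O(n^3)$ space per prime, and sum. The only (harmless) difference is that you spell out the sequential processing and the exclusion of $\ell=p$, while the paper simply notes that the stated time bound dominates the CRT recovery and that storing the residues costs $O(n)$ space.
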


\begin{proof}
By the PNT, the primes $\ell$ used in Schoof's algorithm satisfy $\ell = O(n)$, and there are $O(n/\log n)$ of them.
The time for each $\ell$ is bounded by $O(\M(n^3)n)=O(n^4\log n\llog n)$.
Multiplying this by $O(n/\log n)$ gives the desired time bound, which dominates the time required to recover $t$ using the Chinese Remainder Theorem.
The space bound follows from the $O(\ell^2n)=O(n^3)$ space used per prime $\ell$ and the $O(n)$ spaced needed to store the value $t\bmod \ell$ for each $\ell$.
\end{proof}

\subsection{Identifying Elkies primes}
As above, let $\ell\ne p$ denote an odd prime.
We recall that
$$
E[\ell]\simeq \Z/\ell \Z\times \Z/\ell \Z,
$$
which we may regard as an $\F_\ell$-vector space.
After fixing a basis for $E[\ell]$, each nonzero endomorphism of $E$ determines a matrix in $\GL(2,\F_\ell)$ given by its action on the basis.
The characteristic polynomial of the matrix of the Frobenius endomorphism is precisely the characteristic polynomial of $\pi_\ell$, which does not depend on the choice of basis.

As observed by Elkies, if $t^2-4p$ is a quadratic residue modulo $\ell$ (meaning that $\ell$ is an Elkies prime), then the characteristic polynomial of $\pi_\ell$ splits into linear factors:
$$
X^2-t_\ell X+p_\ell = (X-\lambda_1)(X-\lambda_2)=0.
$$
Here $\lambda_1,\lambda_2\in \F_\ell^*$ are eigenvalues of the matrix of Frobenius in $\GL(2,\F_\ell)$, and it follows that the Frobenius endomorphism fixes at least one linear subspace of $E[\ell]$ (it may fix 1, 2, or $\ell+1$
distinct linear subspaces).
This subspace is an order-$\ell$ subgroup of $E[\ell]$ that is the kernel of a
separable isogeny $\varphi\colon E\to \widetilde E$ of degree $\ell$ (an $\ell$-\emph{isogeny}) that is defined over $\F_p$.

Conversely, if $E$ admits an $\F_p$-rational $\ell$-isogeny, this isogeny is separable, since $\ell\ne p$, and its kernel is an order-$\ell$ subgroup of $E[\ell]$ that is fixed by Frobenius; this implies that the characteristic polynomial of~$\pi_\ell$ splits and that $\ell$ is an Elkies prime.
Thus an odd prime $\ell\ne p$ is an Elkies prime if and only if $E$ admits an $\F_p$-rational $\ell$-isogeny.

We now recall the classical modular polynomial $\Phi_\ell\in\Z[X,Y]$ that parametrises pairs of $\ell$-isogenous elliptic curves in terms of their $j$-invariants. Note that in general, $\Phi_N$ parametrises $N$-isogenies with a cyclic kernel, but when $N=\ell$ is prime the kernel is necessarily cyclic.
The modular polynomial $\Phi_\ell$ has the defining property that over any field $\F$ of characteristic different from $\ell$, the modular equation
$$
\Phi_\ell(j_1,j_2) =0
$$
holds if and only if $j_1$ and $j_2$ are the $j$-invariants of elliptic curves $E_1/\F$ and $E_2/\F$ that are related by an $\F$-rational $\ell$-isogeny $\varphi\colon E_1\to E_2$.

Given an elliptic curve $E/\F_p$, to determine if $\ell$ is an Elkies prime for~$E$, it suffices to check whether the instantiated polynomial
$$
\varphi_\ell(X)=\Phi_\ell(j(E),X)\in \F_p[X]
$$
has a root in $\F_p$; any such root is necessarily the $j$-invariant of an $\ell$-isogenous elliptic curve defined over $\F_p$.

The polynomial $\Phi_\ell(X,Y)$ has degree $\ell+1$ in both $X$ and $Y$, and the size of its largest coefficient is $O(\ell \log \ell)$ bits (see~\cite{BS} for an explicit bound).
It can be computed using a probabilistic algorithm that, under the GRH, runs in $O(\ell^3(\log\ell)^3\llog\ell)$ expected time, using $O(\ell^3\log\ell)$ space \cite{BLS}.
Given $\Phi_\ell$, the time to compute $\varphi_\ell$ is $O(\ell^2\M(\ell\log\ell+n))$, 
where $n = \rf{\log_2 p}$.
Alternatively, there is a probabilistic algorithm to directly compute $\varphi_\ell$ that, under the GRH, runs in
$$
O(\ell^3(\log\ell)^3\llog \ell + \ell^2n(\log n)^2\llog n)
$$
expected time, using just $O(\ell n +\ell^2\log(\ell n))$ space \cite{Suth2}.
Having computed $\varphi_\ell$, we can determine whether it has any roots in $\F_p$ 
by computing $\gcd(X^p-X,\varphi_\ell(X))$.

We note that the probabilistic algorithms we consider here are all of \emph{Las Vegas} type, meaning that their output is always correct, it is only their running times that may depend on random choices.

\begin{theorem}\label{thm:ElkiesPrime}
Assume the GRH, and let $\ell\ne p$ be an odd prime with $\log\ell =O(n)$, where $n = \rf{\log_2 p}$.  The following hold.
\begin{enumerate}[\rm(a)]
\item There is a Las Vegas algorithm that  decides whether $\ell$ is an Elkies prime in
$O(\ell^3(\log \ell)^3\llog\ell + \ell n^2\log n\llog n)$ 
expected time,\\
using $O(\ell n + \ell^2\log(\ell n))$ space.
\item 
There is a  deterministic algorithm that decides whether $\ell$ is an Elkies prime in
$O(\ell^3(\log\ell)^2\llog\ell + \ell n^2\log n\llog n)$
time,\\
using $O(\ell^3\log\ell + \ell^2 n)$ space, assuming $\Phi_\ell$ is given.
\end{enumerate}
\end{theorem}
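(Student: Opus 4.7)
The plan is to reduce both parts of Theorem~\ref{thm:ElkiesPrime} to the same recognition test established in the discussion preceding the theorem: $\ell$ is an Elkies prime for $E$ if and only if the specialisation $\varphi_\ell(X) := \Phi_\ell(j(E), X) \in \F_p[X]$ has a root in $\F_p$, which we detect by computing $\gcd\bigl(X^p - X \bmod \varphi_\ell(X),\, \varphi_\ell(X)\bigr)$ and checking whether it is nontrivial. I therefore split the analysis into stage~(i), the production of $\varphi_\ell$, whose cost differs between the two parts, and stage~(ii), the root test, which is common.

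For stage~(i) in part~(a), I invoke the Las Vegas algorithm of~\cite{Suth2} cited above the theorem, which directly produces $\varphi_\ell$ in expected time $O(\ell^3(\log\ell)^3\llog\ell + \ell^2 n(\log n)^2\llog n)$ and space $O(\ell n + \ell^2\log(\ell n))$. For part~(b), given $\Phi_\ell \in \Z[X,Y]$ with $O(\ell^2)$ integer coefficients of bit length $O(\ell\log\ell)$, I reduce $\Phi_\ell$ modulo $p$ (occupying $O(\ell^2 n)$ bits) and then evaluate at the first variable equal to $j(E)$ by running Horner's rule on each of the $\ell+2$ resulting univariate factors; this gives the $O(\ell^2\,\M(\ell\log\ell+n))$ time bound asserted above the theorem, which expands via Sch\"onhage--Strassen to $O(\ell^3(\log\ell)^2\llog\ell + \ell^2 n\log n\llog n)$, within space $O(\ell^3\log\ell + \ell^2 n)$.

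For stage~(ii), the dominant cost is computing $X^p$ in $\F_p[X]/(\varphi_\ell)$ by square-and-multiply: $O(n)$ multiplications in this ring, each of cost $O(\M(\ell n))$ by Lemma~\ref{lem:arith}. In the regime where $\ell$ is polynomially bounded in $n$ (the range in which the theorem's quoted bounds are tight), one has $\M(\ell n) = O(\ell n \log n \llog n)$, so the exponentiation costs $O(\ell n^2 \log n \llog n)$. The subsequent gcd costs $O(\M(\ell n)\log\ell)$ by Lemma~\ref{lem:arith} and is absorbed, and the additional workspace is $O(\ell n)$.

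Summing the bounds of stages (i) and (ii) yields the claimed time complexities, and taking the maximum of the per-stage workspaces (plus $O(\ell n)$ for $\varphi_\ell$) yields the claimed space bounds. The main subtlety is in stage~(i) of part~(b): the column-wise Horner scheme must combine the coefficient bit length $O(\ell\log\ell)$ and the modulus bit length $n$ \emph{additively} inside $\M$---which indeed happens because every Horner step operates on numbers of bit length $O(\ell\log\ell + n)$---rather than multiplicatively, otherwise the bound would deteriorate by a factor of $\ell\log\ell$ and the claimed running time would not hold.
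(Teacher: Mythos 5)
Your proposal reproduces the paper's own argument: produce $\varphi_\ell=\Phi_\ell(j(E),X)$ (via the algorithm of~\cite{Suth2} in part~(a), via instantiation of the given $\Phi_\ell$ in part~(b)), then test for a root in $\F_p$ by computing $\gcd(X^p-X,\varphi_\ell)$, with the modular exponentiation $X^p\bmod\varphi_\ell$ in $O(\M(\ell n)n)$ time dominating the gcd, and the same space accounting. Two points in your write-up are, however, left incomplete. First, you bound the exponentiation only ``in the regime where $\ell$ is polynomially bounded in $n$'', whereas the theorem assumes only $\log\ell=O(n)$, so $\ell$ may be exponentially large in $n$; the paper closes this with the case split $n\le\ell$, where $\M(\ell n)n=O(\ell n^2\log\ell\llog\ell)=O(\ell^3\log\ell\llog\ell)$ is absorbed into the first term of both bounds, versus $n>\ell$, where it is $O(\ell n^2\log n\llog n)$, the second term. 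Second, ``summing the bounds of stages (i) and (ii)'' silently assumes that the term $O(\ell^2 n(\log n)^2\llog n)$ from~\cite{Suth2} in part~(a), and the term $O(\ell^2 n\log n\llog n)$ from instantiating $\Phi_\ell$ in part~(b), are themselves dominated by the stated bounds; this requires a short comparison of $\ell$ with $n/\log n$ (respectively of $n$ with $\ell\log\ell$, which is exactly the hint given in the paper's proof). Both gaps are routine to fill and do not affect the structure of the argument; your derivation of the $O(\ell^2\M(\ell\log\ell+n))$ instantiation cost and of the space bounds is correct. You should also add the paper's closing observation that in part~(b) no probabilistic subroutine is invoked (evaluating $\Phi_\ell(j(E),X)$ and computing the gcd are deterministic), which is what justifies calling that algorithm deterministic.
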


\begin{proof}
With fast exponentiation it takes $O(\M(\ell n)n)$ time to compute $X^p\bmod \varphi_\ell(X)$, dominating the time to compute $\gcd(X^p-X,\varphi_{\ell,E}(X))$,
by Lemma~\ref{lem:arith}.
If $n \le \ell$ this is bounded by $O(\ell^3\log\ell\llog\ell)$, which is dominated by the first term in both time bounds.
If $n > \ell$ this is bounded by $O(\ell n^2\log n\llog n)$, which is included in both time bounds.
The first time bound dominates the time to compute $\varphi_\ell$, and the second time bound dominates the time to compute $\varphi_\ell$ given $\Phi_\ell$ (consider the cases $n\le \ell\log\ell$ and $n > \ell\log\ell)$).
The space bounds follow immediately from the discussion above.
Finally, note that if $\Phi_\ell$ is given, computing $\varphi_\ell(X)=\Phi_\ell(j(E),X)$ and 
$\gcd(X^p-X,\varphi_\ell(X))$ does not involve the use of any probabilistic algorithms.
\end{proof}

\subsection{Elkies' algorithm}

We now consider the complexity of computing the Frobenius trace $t$ of $E/\F_p$ modulo an Elkies prime $\ell$.
Elkies' algorithm is similar to Schoof's algorithm, except rather than working modulo the $\ell$th division polynomial $\psi_\ell(X)$, it works modulo a \emph{kernel polynomial} $h_\ell(X)$ whose roots are the $x$-coordinates of the nonzero points in the kernel of an $\F_p$-rational $\ell$-isogeny $\varphi\colon E\to \widetilde E$.
The kernel polynomial $h_\ell$ necessarily divides the division polynomial $\psi_\ell$, since $\ker\varphi$ is a subgroup of $E[\ell]$, and it has degree $(\ell-1)/2$, rather than $(\ell^2-1)/2$, which speeds up the algorithm by a factor of at least~$\ell$.

Elkies assumes in~\cite{Elk2} that $E$ is not supersingular, and that $j(E)$ is not~0 or 1728; these restrictions are not a problem, since in any of these special cases there are alternative methods to compute $t$ that are faster than Elkies' algorithm.

Elkies~\cite{Elk2} gives an algorithm to compute the kernel polynomial $h_\ell(X)$ using the instantiated modular polynomial $\varphi_\ell(X)=\Phi_\ell(j(E),X)$, along with various instantiated partial derivatives of $\Phi_\ell(X,Y)$ that can either be computed directly using the algorithm in \cite{Suth2} or derived from $\Phi_\ell$ and instantiated.
The first step is to find a root of $\varphi_\ell$ in $\F_p$, which is necessarily the $j$-invariant of an elliptic curve $\widetilde E$ that is the image of an $\ell$-isogeny $\varphi\colon E\to \widetilde E$.
Using Rabin's probabilistic algorithm~\cite{Rab}, this can be accomplished in $O(\M(\ell n)n)$ expected time, assuming $\log\ell = O(n)$.
Once this has been done, one computes $h_\ell$ using \cite[Alg.\ 27]{Gal}, which takes $O(\ell^2\M(n)+\ell\M(n)\log n)$ time.

\begin{theorem}\label{thm:Elkies}
Assume the GRH, and let $\ell\ne p$ be an odd prime with $\log \ell = O(\log p)$.
Let $E/\F_p$ be an ordinary elliptic curve with $j(E)\not\in\{0,1728\}$.
If $\ell$ is an Elkies prime for $E$, then one can compute the Frobenius trace $t$ modulo $\ell$ in
\begin{enumerate}[\rm (a)]
\item $O(\ell^3(\log\ell)^3\log\log\ell + \ell n^2\log n\llog n)$
expected time,\\
using $O(\ell n + \ell^2\log(\ell n))$ space;
\item $O(\ell^3(\log\ell)^2\llog\ell + \ell n^2\log n\llog n)$
expected time,\\
using $O(\ell^3\log\ell + \ell^2 n)$ space,  if $\Phi_\ell$ is given.
\end{enumerate}
\end{theorem}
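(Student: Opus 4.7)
The plan is to follow exactly the algorithm sketched in the prose before the theorem, bound the cost of each of its five stages, and combine them. Parts (a) and (b) differ only in how the instantiated modular polynomial $\varphi_\ell(X)=\Phi_\ell(j(E),X)$ is produced in the first stage; the remaining four stages are identical and are absorbed into the common terms $O(\ell^3\log\ell\llog\ell+\ell n^2\log n\llog n)$ via the same case split $n\le\ell$ versus $n>\ell$ used in the proof of Theorem~\ref{thm:ElkiesPrime}.

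For producing $\varphi_\ell$, in case (a) I would invoke the direct algorithm of~\cite{Suth2}, with expected time $O(\ell^3(\log\ell)^3\llog\ell+\ell^2n(\log n)^2\llog n)$ and space $O(\ell n+\ell^2\log(\ell n))$; in case (b) I would substitute $j(E)$ into the given $\Phi_\ell$ using Lemma~\ref{lem:arith}, in $O(\ell^2\M(\ell\log\ell+n))$ time, which (splitting on $n\le\ell$ versus $n>\ell$) is $O(\ell^3(\log\ell)^2\llog\ell+\ell n^2\log n\llog n)$, using $O(\ell^3\log\ell+\ell^2n)$ space for $\Phi_\ell$ itself. Next, finding a root $\widetilde j\in\F_p$ of $\varphi_\ell$ by Rabin's algorithm~\cite{Rab} costs $O(\M(\ell n)n)$ expected time, which is $O(\ell^3\log\ell\llog\ell)$ when $n\le\ell$ and $O(\ell n^2\log n\llog n)$ when $n>\ell$. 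Building the kernel polynomial $h_\ell$ of degree $(\ell-1)/2$ via~\cite[Alg.~27]{Gal} and the instantiated partial derivatives of $\Phi_\ell$ (available in both cases) takes $O(\ell^2\M(n)+\ell\M(n)\log n)$ time, absorbed by the same case split.

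With $h_\ell$ in hand I would compute the Frobenius endomorphism $\pi_\ell$ represented as $(X^p\bmod h_\ell,\,f_E(X)^{(p-1)/2}Y\bmod h_\ell)$ by fast exponentiation in $\F_p[X]/(h_\ell(X))$, at cost $O(\M(\ell n)n)$. Since $\ell$ is an Elkies prime, $\pi_\ell$ acts as scalar multiplication by an eigenvalue $\lambda\in\F_\ell^\times$ on the kernel of the $\ell$-isogeny $\varphi\colon E\to\widetilde E$, so I recover $\lambda$ by a linear search through $\{1,\ldots,\ell-1\}$: incrementally building multiples of the identity endomorphism in projective coordinates in the quotient ring at $O(\M(\ell n))$ per step and testing equality with $\pi_\ell$, terminating within $\ell$ steps. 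Then $t\equiv \lambda + p/\lambda\pmod\ell$. Summing the costs of all five stages yields the stated time bounds; the space bounds are immediate because, besides $\varphi_\ell$ (and $\Phi_\ell$ in case (b)), only $O(1)$ polynomials of degree $O(\ell)$ with $O(n)$-bit coefficients are live at any moment. The main obstacle, and the place where a careful reader must concentrate, is the case split $n\le\ell$ versus $n>\ell$ required to reduce the crude $\M(\ell n)\cdot n$ and $\ell\cdot\M(\ell n)$ bounds — which a priori contain factors of $\log(\ell n)$ — to the shape $\ell^3\log\ell\llog\ell+\ell n^2\log n\llog n$ advertised in the theorem.
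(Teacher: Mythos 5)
Your proposal is correct and follows essentially the same route as the paper: the cost of producing $\varphi_\ell$ (directly or by instantiating a given $\Phi_\ell$) dominates, exactly as packaged in Theorem~\ref{thm:ElkiesPrime}, while root-finding, the kernel polynomial via \cite[Alg.~27]{Gal}, and the final trace computation modulo $h_\ell$ in time $O(\M(\ell n)(\ell+n))$ are all absorbed by the stated bounds through the same $n$ versus $\ell$ case analysis. The only cosmetic difference is that you recover $t$ via the eigenvalue search $\pi_\ell=\lambda$ on $\ker\varphi$ with $t\equiv\lambda+p/\lambda\pmod\ell$, whereas the paper phrases this last stage as the Schoof-style matching of Theorem~\ref{thm:SchoofTrace} with $\deg h_\ell=O(\ell)$; the complexity is identical.
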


\begin{proof}
Theorem~\ref{thm:ElkiesPrime} bounds the complexity of computing $\varphi_\ell$ and determining whether it has a root in $\F_p$, both when $\Phi_\ell$ is given and when it is not.
In both cases, these bounds dominate the complexity of finding a root of $\varphi_\ell$ computing the kernel polynomial $h_\ell$.
Once $h_\ell$ has been computed, $t \bmod \ell$  can be computed in $O(\M(\ell n)(\ell+n))$ time using $O(\ell n)$ space; the argument is the same as in Theorem \ref{thm:SchoofTrace}, except the degree of $h_\ell$ is $O(\ell)$ rather than $O(\ell^2)$.
These bounds are dominated by both sets of bounds above.
\end{proof}

The bounds in Theorem~\ref{thm:Elkies} are the same as the corresponding bounds in Theorem~\ref{thm:ElkiesPrime}; the complexity of determining if $\ell$ is an Elkies prime dominates the complexity of computing $t$ modulo an Elkies prime.

\begin{corollary}\label{cor:Elkies}
Let $E/\F_p$ be an elliptic curve, and suppose that the least integer $L$ for which the product of the Elkies primes $\ell\le L$ exceeds $4\sqrt{p}$ is $O(\log p)$.  Let $n=\rf{\log_2 p}$.
There is a Las Vegas algorithm to compute the Frobenius trace $t$ of $E$ in 
\begin{enumerate}[\rm(a)]
\item $O(n^4(\log n)^2\log\log n)$
expected time, using $O(n^2\log n)$ space;
\item $O(n^4\log n\log\log n)$
expected time, using $O(n^4)$ space, if the modular polynomials $\Phi_\ell$
for all primes $\ell\le L$ are precomputed. 
\end{enumerate}
\end{corollary}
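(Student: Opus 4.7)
The plan is to implement the Schoof--Elkies--Atkin strategy: loop through the odd primes $\ell \le L$ (other than $p$), where $L = O(n)$ by hypothesis, apply Theorem~\ref{thm:ElkiesPrime} to decide which $\ell$ are Elkies primes, and for each Elkies prime apply Theorem~\ref{thm:Elkies} to compute $t \bmod \ell$. We stop once the product of the Elkies primes found so far exceeds $4\sqrt p$ (the hypothesis guarantees this happens while $\ell\le L$), then recover $t$, and hence $\#E(\F_p) = p+1-t$, by the Chinese Remainder Theorem. The primes $\ell\le L$ themselves are produced by a linear sieve; the time and space for this, and for the final CRT reconstruction, are bounded by $O(n^2)$ and hence absorbed into every bound below. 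Exceptional curves with $j(E)\in\{0,1728\}$ or for which $E$ is supersingular are detected in advance in $(\log p)^{3+o(1)}$ time and handled by ad hoc methods that are negligible in our asymptotics, as noted in the introduction.

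For part~(a), fix $\ell\le L = O(n)$. Theorem~\ref{thm:ElkiesPrime}(a) decides whether $\ell$ is Elkies in
\[
O\bigl(\ell^3(\log\ell)^3\llog\ell + \ell n^2 \log n \llog n\bigr) = O\bigl(n^3(\log n)^3\llog n\bigr)
\]
expected time and $O(\ell n+\ell^2\log(\ell n)) = O(n^2\log n)$ space; Theorem~\ref{thm:Elkies}(a) has the same asymptotic complexity when $\ell$ is Elkies. Summing over the $\pi(L) = O(n/\log n)$ primes $\ell\le L$ yields the expected running time
\[
O\!\left(\frac{n}{\log n}\cdot n^3(\log n)^3\llog n\right) = O\bigl(n^4(\log n)^2\llog n\bigr).
\]
Between iterations we retain only the $O(\log n)$-bit residues $t\bmod\ell$, so the overall working space is $O(n^2\log n)$, as claimed.

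For part~(b), Theorem~\ref{thm:ElkiesPrime}(b) and Theorem~\ref{thm:Elkies}(b) cost $O(n^3(\log n)^2 \llog n)$ expected time per prime given the corresponding $\Phi_\ell$; summing over $O(n/\log n)$ primes yields the desired $O(n^4\log n\llog n)$ expected time. For the space bound we must account for the precomputed modular polynomials: each $\Phi_\ell$ occupies $O(\ell^3\log\ell)$ bits, and by the prime number theorem
\[
\sum_{\ell\le L} \ell^3\log\ell \;\le\; (\log L)\sum_{\ell\le L}\ell^3 \;=\; O(L^4) \;=\; O(n^4),
\]
which dominates the per-iteration working space $O(\ell^3\log\ell+\ell^2 n) = O(n^3\log n)$.

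The only point requiring care is that the per-prime bounds of Theorem~\ref{thm:ElkiesPrime} already subsume every other subroutine involved (finding a root of $\varphi_\ell$, computing the kernel polynomial $h_\ell$, and running the Elkies-style Schoof loop modulo $h_\ell$ of degree $(\ell-1)/2$), so no additional summation is needed; the main obstacle is therefore purely bookkeeping, namely verifying these dominations and checking the PNT-type sum $\sum_{\ell\le L}\ell^3\log\ell = O(L^4)$ that secures the $O(n^4)$ storage bound in~(b).
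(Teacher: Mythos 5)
Your proposal is correct and follows essentially the same route as the paper: dispose of the supersingular and $j(E)\in\{0,1728\}$ cases separately, then apply Theorem~\ref{thm:Elkies} (whose bounds subsume those of Theorem~\ref{thm:ElkiesPrime}) to the $O(n/\log n)$ primes $\ell\le L=O(n)$ and finish with CRT, the per-prime bounds dominating everything else. The only differences are cosmetic: the paper spells out the special-case subroutines (the supersingularity test of~\cite{Suth1}, Cornacchia plus the algorithms of~\cite{RS}) where you defer to ``ad hoc methods,'' while you are slightly more explicit than the paper in checking that storing all precomputed $\Phi_\ell$ fits in the $O(n^4)$ space bound of part~(b).
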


\begin{proof}
We first determine whether $E$ is supersingular or not; using the algorithm in \cite{Suth1} this can be done in $O(n^3\log n\llog n)$ expected time using $O(n)$ space.
If $E$ is supersingular then $t\equiv 0\bmod p$, and for $p\ge 5$ the Hasse bound $|t|\le 2\sqrt{p}$ implies $t=0$
(for $p\le 3$ we can count points na\"ively and output $p+1-\#E(\F_p)$).

If $j(E)=0$ then $E$ has CM by $\Q(\sqrt{-3})$, and the norm equation $4p=t^2+3v^2$ can be solved using Cornacchia's algorithm in $O(n^2)$ time.
This determines at most 6 possibilities for $t$; the correct one can be distinguished using \cite[Alg.\ 3.5]{RS}.
Similarly, if $j(E)=1728$ then $E$ has CM by $\Q(i)$, so we solve $4p=t^2+v^2$ and apply \cite[Alg.\ 3.4]{RS}.

Otherwise, we apply Theorem~\ref{thm:Elkies} to each Elkies prime $\ell\le L$.
There are $O(n/\log n)$ such primes, each bounded by $O(n)$.
This yields the desired complexity bounds, which dominate the complexity of recovering~$t$ using the Chinese Remainder Theorem.
\end{proof}

\begin{remark}
The $O((\log p)^2)$ space complexity bound for SEA listed in~\cite[p.~421]{ACDFLNV} is incorrect; 
the space complexity of the algorithm given there is $\Omega((\log p)^3)$ (consider line~3 of \cite[Alg.~17.25]{ACDFLNV}, for example).
\end{remark}

\subsection{Bounding Elkies primes}

We now sharpen the bound of Galbraith and Satoh~\cite[Theorem~5]{Sat}
on the size of an interval in which one can guarantee the existence
sufficiently many Elkies primes, assuming the GRH.

We recall the classical bound, see~\cite[Ch.~13]{Mont}, 
that asserts that under the GRH, for any integer $D\ge 2$, 
\begin{equation}
\label{eq:GRH Char1}
\sum_{n \le L} \(1 - \frac{n}{L}\) \(\frac{D}{n}\) \Lambda(n)= O(L^{1/2} \log D),
\end{equation}
where  
$\Lambda(n)$ denotes the von~Mangoldt function given by
$$
\Lambda(n)=
\begin{cases}
\log \ell &\quad\text{if $n$ is a power of the prime $\ell$,} \\
0&\quad\text{if $n$ is not a prime power.}
\end{cases}
$$
After discarding the contribution $O(L^{1/2})$ from $O(L^{1/2}/\log L)$ 
prime powers up to $L$, we see that~\eqref{eq:GRH Char1} is equivalent
to 
\begin{equation}
\label{eq:GRH Char}
\sum_{\ell\le L} \(1 - \frac{\ell}{L}\) \(\frac{D}{\ell}\) \log \ell= O(L^{1/2} \log D).
\end{equation}

Let  $R$ and $R_0$ be the number of primes  $\ell \le L$ such $D$ is a
quadratic residue modulo $\ell$ and such that  $\ell \mid D$, respectively.
Let $M$ is the smallest integer with $\pi(M) = \pi(L) - R - R_0$.
Therefore, by the PNT and partial summation,
\begin{equation}
\label{eq:Triv}
\begin{split}
\sum_{\ell \le L} \(1 - \frac{\ell}{L}\) \(\frac{D}{\ell}\) \log \ell & \le  -  \sum_{\ell \le M} \(1 - \frac{\ell}{L}\)  \log \ell +  R\log L   \\
  & =  -  \(1 - \frac{M }{2L}+o(1)\)M   + R\log L   \\
  & \le  -  \(\frac{1 }{2}+o(1)\) M + R\log L  .
  \end{split}
\end{equation}
Since $R_0 = O(\log D)$, we see that if $L\gg (\log D)^2$
then $R_0 = o(L)$. If $R> L/(5\log L)$ there is nothing to prove.
Otherwise, applying the PNT again, we obtain 
$$
 M\ge \(\frac{4}{5}+o(1)\) L \mand R\log L \le  \(\frac{1 }{5}+o(1)\) L.
 $$
Substituting these bounds in~\eqref{eq:Triv}, we derive
\begin{equation}
\label{eq:Conts}
\sum_{\ell \le L} \(1 - \frac{\ell}{L}\) \(\frac{D}{\ell}\) \log \ell
\le -  \(\frac{1 }{5}+o(1)\)L.
\end{equation}
Now, recalling~\eqref{eq:GRH Char}   and 
taking $L \ge C(\log D)^2$ we see that~\eqref{eq:Conts}
is impossible and thus in this case  $ R \ge L/(5\log L)$. 
Note that using the estimates of~\cite{LaLiSo} one can get a completely 
explicit version of this estimate, with explicit constants. 
In particular, this means that one can simply take $C (\log p)^2$ 
in~\cite[Theorem~5]{Sat}.
Thus for an 
appropriate absolute constant $C> 0$, for any $L \ge C(\log D)^2$
there are  at least $L/(5\log L)$ Elkies primes up to $L$. 
In the SEA algorithm we can simply take $L =  C(\log D)^2$.

\begin{corollary}\label{cor:Elkies2}
Under the GRH, the expected running time of the SEA algorithm is
$O(n^8(\log n)^2\llog n)$.
\end{corollary}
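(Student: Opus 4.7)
The plan is to combine the bound on the smallest $L$ producing enough Elkies primes, derived immediately above, with the per-prime complexity estimates from Theorems~\ref{thm:ElkiesPrime} and~\ref{thm:Elkies}. Since the Frobenius discriminant satisfies $|D|=|t^2-4p|<4p$, we have $\log|D|=O(n)$, so the preceding discussion guarantees that taking $L=Cn^2$ for a suitable absolute constant $C>0$ already yields at least $L/(5\log L)=\Omega(n^2/\log n)$ Elkies primes $\ell\le L$. This is far more than the $O(n/\log n)$ Elkies primes needed to make the product of accumulated Elkies primes exceed $4\sqrt{p}$, so it suffices to bound the cost of processing every prime $\ell\le L$.

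For each odd prime $\ell\le L$ different from~$p$, we first apply Theorem~\ref{thm:ElkiesPrime}(a) to decide whether $\ell$ is an Elkies prime, and when it is we apply Theorem~\ref{thm:Elkies}(a) to recover $t\bmod \ell$. Both bounds have the shape
$$
O\bigl(\ell^3(\log\ell)^3\llog\ell+\ell n^2\log n\llog n\bigr),
$$
and substituting $\ell=O(n^2)$ makes the first summand $O(n^6(\log n)^3\llog n)$ and the second $O(n^4\log n\llog n)$, so the first dominates. The supersingular case and the CM values $j(E)\in\{0,1728\}$ are handled by the alternative subroutines invoked in the proof of Corollary~\ref{cor:Elkies}, which contribute only lower-order terms.

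Summing the per-prime bound over the $\pi(L)=O(n^2/\log n)$ primes $\ell\le L$ gives a total expected running time of
$$
O\!\left(\frac{n^2}{\log n}\cdot n^6(\log n)^3\llog n\right)=O\bigl(n^8(\log n)^2\llog n\bigr),
$$
which dominates the $O(n)$ applications of the Chinese Remainder Theorem needed to reconstruct~$t$. The only substantive ingredient is the sharpening $L=O((\log p)^2)$ of the Galbraith--Satoh estimate $L=(\log p)^{2+o(1)}$ established in the paragraphs immediately above; the rest is a mechanical substitution into the complexity bounds of Theorems~\ref{thm:ElkiesPrime} and~\ref{thm:Elkies}, and there is no real obstacle beyond keeping track of which of the two summands of each per-prime bound dominates.
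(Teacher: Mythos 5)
Your proposal is correct and follows essentially the same route the paper intends: take $L=C(\log|D_p|)^2=O(n^2)$ from the GRH character-sum argument in the ``Bounding Elkies primes'' subsection, apply the part~(a) bounds of Theorems~\ref{thm:ElkiesPrime} and~\ref{thm:Elkies} to each of the $O(n^2/\log n)$ primes $\ell\le L$, and multiply, which gives $O(n^8(\log n)^2\llog n)$ with the CM and supersingular cases handled as in Corollary~\ref{cor:Elkies}. The bookkeeping of which summand dominates after substituting $\ell=O(n^2)$ is also exactly as the paper's bound requires.
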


\begin{remark}
If one assumes that the reduced polynomials $\Phi_\ell\bmod p$ have been precomputed for all $\ell\le L$, the bound in Corollary~\ref{cor:Elkies2} can be improved to $O(n^7\llog n)$; this assumption does not make sense in our setting, where $p$ is varying, but it might be appropriate if many computations use the same prime~$p$, as in \cite{Sat}.  As noted in the introduction, the bound $(\log p)^{3\mu+2+o(1)}$ given in \cite[Appendix A]{Sat} is incorrect; under the assumption that all $\Phi_\ell\bmod p$ are precomputed (as assumed there), the bound should be $(\log p)^{\max(\mu+6,3\mu+3)+o(1)}$, where $\mu\in[1,2]$ has the property that two $n$-bit integers can be multiplied in time $n^{\mu+o(1)}$ (so in fact one can take $\mu=1$).
\end{remark}

We should note that the bound in Corollary~\ref{cor:Elkies2} is of purely philosophical interest.
As a practical matter, there is no reason to ever apply Elkies' algorithm to 
primes $\ell \gg  n^{4/3}$, since for such $\ell$ one can use Schoof's algorithm to compute the Frobenius trace $t\in \Z$ more quickly than one can compute $t\bmod \ell$ using Elkies' algorithm.
More generally, one may adopt a hybrid approach as follows.
Enumerate odd primes $\ell\ne p$ in increasing order.
If $\ell$ is an Elkies prime, use Elkies' algorithm to compute $t\bmod \ell$, otherwise, add $\ell$ to a list $S$ that contains all previously considered primes $\ell$ for which $t\bmod \ell$ is not yet known.
Before determining whether the next prime $\ell$ is an Elkies prime, first check whether $\ell^{3/4} > c\ell_0$,
where $\ell_0=\min(S)$ and $c$ is a suitably chosen constant.
If this condition holds, then compute $t\bmod \ell_0$ using the method of Schoof, remove $\ell_0$ from $S$, and repeat.
Terminate as soon as the value of $t$ is known modulo a set of primes whose product exceeds $4\sqrt{p}$.
This approach guarantees an expected running time of $n^{5+o(1)}$, and heuristically achieves an expected running time of $n^{4+o(1)}$. 

%
%

\section{Comments}

In principle one can extend Lemmas~\ref{lem:CharSum2} and~\ref{lem:CharSum4}
to any number of primes $\ell_1, \ldots, \ell_{2\nu}$. 
However, one needs a general argument for computing
$A_1(\ell_1, \ldots, \ell_{2\nu}) - A_{-1}(\ell_1, \ldots, \ell_{2\nu})$,
analogous to that given in~\eqref{eq:A-A}. 
Using such  an extension one can consider larger values of $\nu$ in 
Theorem~\ref{thm:AvElkies} and Corollary~\ref{cor:AvElkies}.

It is shown in~\cite{Shp} that the bound of~\cite{ShpSuth},
which applies to almost all curves, cannot be extended 
to all curves modulo all primes. It would be interesting 
to try to derive a ``horizontal'' analogue of this lower 
bound.

We note that one can obtain an unconditional analogue
of Theorem~\ref{thm:AvElkies} as all the necessary tools (Lemmas~\ref{lem:CharSum2}, \ref{lem:CharSum4} and~\ref{lem:Cong}),
admit unconditional analogues; see~\cite{CojDav,DW1,DW2}.
However such a result requires $L$ to be smaller than $\log P$
which is not suitable for applications to the SEA algorithm.

\section*{Acknowledgements}

The authors thank Steven Galbraith and Takakazu Satoh for their help in 
clarifying the bounds in~\cite[Appendix A]{Sat}, and the referees for a
careful reading of the manuscript and several useful comments.

The inspiration for this work occurred during a very enjoyable stay of the authors
at the CIRM, Luminy.

During the preparation of this result, 
I.~E.~Shparlinski was supported in part
by ARC grant DP140100118 and
A.~V.~Sutherland received financial support from NSF grant DMS-1115455.

\end{document}